\setlist[enumerate,1]{label={(\Alph*)}}
\setlist[enumerate,2]{label={(\alph*)}}
\setlist[enumerate,3]{label={$\bullet_{\arabic*}$}}
\newtheorem{theorem}{Theorem}[section]
\newtheorem{maintheorem}{Main Theorem}[section]
\newtheorem{claim}[theorem]{Claim}
\newtheorem{corollary}[theorem]{Corollary}
\newtheorem{lemma}[theorem]{Lemma}
\theoremstyle{definition}
\newtheorem{definition}[theorem]{Definition}
\theoremstyle{remark}
\newtheorem{question}[theorem]{Question}
\newcommand{\seq}[1]{{\langle{#1}\rangle}}
\newcommand{\restr}{\mathord{\upharpoonright}}
\renewcommand{\subset}{\subseteq}
\renewcommand{\epsilon}{\varepsilon}
\newcommand{\CH}{\mathrm{CH}}
\newcommand{\dom}{\mathrm{dom}}
\newcommand{\GCH}{\mathrm{GCH}}
\newcommand{\ZFC}{\enusuremath{\mathrm{ZFC}}}
\def\forces{\Vdash}
\def\ZFC{\mathsf{ZFC}}
\def\MM{\mathsf{MM}}
\def\PFA{\mathsf{PFA}}
\def\MA{\mathsf{MA}}
\def\BA{\mathsf{BA}}
\def\GCH {\mathsf{GCH}}
\def\CH {\mathsf{CH}}
\def\Q{\mathbb Q}
\def\P{\mathbb P}
\newcommand{\R}{\mathbb{R}}
\newcommand{\w}{\omega}
\newcommand{\ran}{\mathrm{ran}}
\def\cc{2^{\aleph_0}}
\def\mathunderaccent#1#2 {\let\theaccent#1\skewfactor#2
\mathpalette\putaccentunder}
\def\putaccentunder#1#2{\oalign{$#1#2$\crcr\hidewidth
\vbox to.2ex{\hbox{$#1\skew\skewfactor\theaccent{}$}\vss}\hidewidth}}
\newbox\noforkbox \newdimen\forklinewidth
\noforkbox\hbox{\box1\box0\relax}
\def\unionstick{\mathop{\copy\noforkbox}\limits}
\def\nonfork#1#2_#3{#1\unionstick_{\textstyle #3}#2}
\def\nonforkin#1#2_#3^#4{#1\unionstick_{\textstyle #3}^{\textstyle
    #4}#2}
\newbox\doesforkbox
\doesforkbox\hbox{\box1\box0\relax}
\def\nunionstick{\mathop{\copy\doesforkbox}\limits}
\def\fork#1#2_#3{#1\nunionstick_{\textstyle #3}#2}
\def\forkin#1#2_#3^#4{#1\nunionstick_{\textstyle #3}^{\textstyle
    #4}#2}
\newcommand{\stickT}{%
\setbox255=\hbox{\raise1ex\hbox{$\hspace{0.2pt}\,\bullet\,$}}
\mathord{\rlap{\hbox to\wd255{\hss\hbox{$|$}\hss}}
\box255}
}
\newcommand{\stickS}{%
\setbox255=\hbox{\raise0.6ex\hbox{$\scriptstyle\bullet$}}
\mathord{\rlap{\hbox to\wd255{\hss\hbox{$\scriptstyle|$}\hss}}
\box255}
}
\author[P. Marun]{Pedro Marun}
\address[P. Marun]{
Institute of Mathematics, 
Czech Academy of Sciences, 
{\v Z}itn{\'a} 25, Prague 1, 
115 67, Czech Republic
}
\urladdr{https://pedromarun.github.io}
\author[S. Shelah]{Saharon Shelah}
\address[S. Shelah]{Einstein Institute of Mathematics,
The Hebrew University of Jerusalem,
9190401, Jerusalem, Israel; and\\
Department of Mathematics,
Rutgers University,
Piscataway, NJ 08854-8019, USA}
\urladdr{https://shelah.logic.at/}
\author[C. B. Switzer]{Corey Bacal Switzer}
\address[C. B. Switzer]{Kurt G\"{o}del Research Center, Faculty of Mathematics, University of Vienna, Kolingasse 14 -- 16, 1090 Wien, Austria}
\urladdr{}
\thanks{First typed 2025-12-19.
Research of the second author partially supported by the Israel Science Foundation (ISF) grant no: 2320/23. The first author was supported by the Czech Academy of Sciences (RVO 67985840). The research of the third author was funded in whole or in part by the Austrian Science Fund (FWF) grant doi 10.55776/ESP548. For open access purposes, the author has applied a CC BY public copyright license to any author-accepted manuscript version arising from this submission. References like [Sh:950, Th0.2=Ly5] mean that the internal label of Th0.2 is y5 in Sh:950.
The reader should note that the version on S. Shelah's website is usually more up-to-date than the one in arXiv.
This is publication number    
1272
in Saharon Shelah's list.
}
\subjclass[2020]{03E05, 03E35, 03E50}
\keywords{$\aleph_1$-dense sets, Forcing, Baumgartner's Axiom, Martin's Axiom}
\date{December 24th, 2025} 
\title{Baumgartner's Axiom and Small Posets}
\begin{document}
\begin{abstract}
    We contribute to the study of $\aleph_1$-dense sets of reals, a mainstay in set theoretic research since Baumgartner's seminal work in the 70s. In particular, we show that it is consistent with $\MA$ that there exists an $\aleph_1$-dense set of reals $A$ so that, in any cardinal-preserving generic extension by a forcing of size $\aleph_1$, $A$ and $A^*$ do not contain uncountable subsets which are order isomorphic. This strengthens a result of Avraham and the second author and yields a different proof of a theorem of Moore and Todorcevic.
\end{abstract}
\maketitle

\setcounter{section}{-1}
\section{Introduction}

One of the most important early results in forcing theory and its applications is Baumgartner's theorem from \cite{Baum73} that consistently all $\aleph_1$-dense sets of reals are order isomorphic. This statement is now commonly known in the literature as {\em Baumgartner's Axiom} and denoted $\BA$, see \cite{weakBA} for more on the background. Here a set of reals is \textit{$\aleph_1$-dense} if between any two points there are $\aleph_1$-many. Clearly each such is isomorphic to one which is moreover $\aleph_1$-dense in the reals - i.e. has intersection size $\aleph_1$ with every nonempty real open interval. The idea behind Baumgartner's proof is to assume $\CH$ and show that, given a pair of $\aleph_1$-dense sets of reals $A$ and $B$, there is a ccc forcing notion of size $\aleph_1$ which generically adds an isomorphism between them. From the modern perspective it is then clear that a careful bookkeeping suffices to provide a model of $\BA + \cc = \aleph_2$. Baumgartner also showed later in \cite{BaumPFA} that $\BA$ also follows from $\PFA$. 

A natural question is whether Baumgartner's result actually relies on $\CH$ i.e. given two $\aleph_1$-dense linear orders $A$ and $B$ is there necessarily a ccc forcing notion which generically adds an isomorphism between them. This was asked by Baumgartner in \cite{Baum73}. Similarly one can ask whether $\BA$ follows from $\MA$ already and not just $\PFA$. The answer to both these questions is ``no", due to a celebrated result of Avraham and the second author from \cite{Sh:106}.

\begin{theorem}[{\cite[Theorem 2]{Sh:106}}]
For any regular $\kappa > \aleph_1$ it is consistent that $\cc = \kappa$, $\MA$ holds and there is an $\aleph_1$-dense $A \subseteq \mathbb R$ which is not isomorphic to its reverse ordering. 
\end{theorem}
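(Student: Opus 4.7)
The plan is to adapt the standard technique for forcing $\MA$ with $\cc = \kappa$ --- a finite support iteration of ccc forcings of length $\kappa$ bookkeeping through all ccc posets of size below $\kappa$ --- while arranging for a specific $\aleph_1$-dense $A \subseteq \mathbb{R}$ constructed in the ground model to remain non-isomorphic to its reverse throughout the iteration. The heart of the argument must therefore be the construction of $A$ with a rigidity property robust enough to survive arbitrary ccc forcing.

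First, working in a ground model $V \models \GCH$, I would carry out a $\diamondsuit_{\omega_1}$-guided recursion of length $\omega_1$ to build $A = \{a_\alpha : \alpha < \omega_1\}$, keeping $A$ $\aleph_1$-dense in $\mathbb{R}$. The role of the diamond is to anticipate all triples $(\P, \name{f}, X)$ where $\P$ is a ccc poset with $|\P| \leq \aleph_1$, $\name{f}$ is a $\P$-name for an order-reversing injection, and $X \subseteq \omega_1$ is an uncountable potential domain. At each stage $\alpha$ where $\diamondsuit$ coughs up such a triple, $a_\alpha$ is chosen so as to seal off this threat, for instance by arranging the order relationship of $a_\alpha$ with prior $a_\beta$'s in a way incompatible with the guessed $\name{f}$ on a cofinal subset of $X \cap \alpha$. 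Since at stage $\alpha$ there are only countably many constraints to avoid but $\aleph_1$-many candidates in every nonempty open interval, such a choice can always be made.

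Second, I would run the standard finite support iteration $\langle \P_\alpha, \name{\bfQ}_\beta : \alpha \leq \kappa, \beta < \kappa \rangle$ for $\MA$ relative to posets of size less than $\kappa$. The crucial step is a preservation theorem: at each intermediate stage $\beta$, $A$ continues to be rigid in the sense above, i.e., no ccc poset in $V[G_\beta]$ adds an isomorphism $A \to A^*$. By a $\Delta$-system reduction applied to any hypothetical name for such an isomorphism, one reduces to a configuration that contradicts the diamond-coded obstruction chosen at some stage $\alpha < \omega_1$ during the construction of $A$. Because $\MA$ in $V[G_\kappa]$ implies that no ccc poset of size $<\kappa$ can add a generic object defeated by the ground model, the preserved rigidity directly yields $A \not\cong A^*$ in the final model.

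The principal obstacle I foresee is the preservation step: maintaining rigidity under a finite support ccc iteration. The difficulty is that ccc iterations add many new $\omega_1$-sequences of reals, so the combinatorial invariant used must be chosen so as to remain effective between ground and iterated extensions. I would expect the verification to require a careful $\Delta$-system reduction pushing the tail behavior of a hypothetical isomorphism down to ground-model terms, or else an appeal to a \emph{Shelah-type} preservation theorem showing that such anti-isomorphism properties survive finite support ccc iteration. A secondary subtlety is ensuring that the diamond-based construction anticipates not just ground-model threats but potential threats coming from any intermediate ccc extension along the iteration; this is where the choice of invariant must be genuinely absolute rather than merely ground-model rigid.
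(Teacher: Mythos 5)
Your proposal has the right outer shell (ground model construction of a special $A$, then a finite support iteration for $\MA$ with a preservation argument), but the two ingredients you leave as ``obstacles'' are exactly where the proof lives, and the way you set them up cannot work as stated. First, a $\diamondsuit$-guided construction in $V$ can only seal off ccc posets and names that exist in $V$; the dangerous isomorphisms arise from posets appearing at intermediate stages $V[G_\beta]$ of the length-$\kappa$ iteration, which no ground-model diamond sequence can enumerate, and $\diamondsuit$ itself is destroyed long before the iteration ends (indeed $\MA+\neg\CH$ refutes it), so the guessing cannot be re-run. What is needed instead is a single, concrete, combinatorial property of an enumeration of $A$ that makes sense in every model and can be checked to survive each step. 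In \cite{Sh:106} this is the property the present paper calls \emph{good} (Definition \ref{good definition}): for every uncountable family of $n$-tuples from $A$ there are two tuples coordinatewise comparable in the same direction. A set of $\aleph_1$ mutually generic Cohen reals is good, and under $\MA$ a good set is \emph{essentially increasing} (every uncountable partial injection $A\rightharpoonup A$ is increasing on an uncountable set), which immediately gives $A\not\cong A^*$.

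Second, you propose to iterate through \emph{all} ccc posets of size $<\kappa$ and then prove preservation afterwards. That order of quantifiers is fatal: preservation genuinely fails for some ccc posets (under $\CH$-like hypotheses Baumgartner's poset for $A\cong A^*$ is ccc and destroys any rigidity property), so no $\Delta$-system argument or general ``Shelah-type preservation theorem'' can rescue an iteration that forces with everything. The actual argument restricts the bookkeeping to \emph{appropriate} posets (Definition \ref{ap}), i.e.\ those ccc posets preserving goodness; finite support iterations of appropriate posets are appropriate (Lemma \ref{iteration}); and the final, decidedly nontrivial step is to show that in the resulting model \emph{every} ccc poset is appropriate --- roughly, a ccc inappropriate poset would, via $\MA$ for an auxiliary appropriate poset, yield an uncountable antichain in itself --- so that $\MA$ restricted to appropriate posets is already full $\MA$. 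Your proposal contains no analogue of either the appropriateness restriction or this last closing argument, and without them the iteration as you describe it would destroy the rigidity of $A$ at some stage.
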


Note that were there a ccc forcing notion in this model adding an isomorphism between $A$ and its reverse ordering then they would have to already be isomorphic by $\MA$. For this reason, in this model not only does $\BA$ fail but it can never be forced by a ccc forcing. Given this, it is natural to ask the following two questions.
\begin{question}
Given $A$ and $B$ which are $\aleph_1$-dense is there always a {\em proper} forcing notion of size less than the continuum which adds an isomorphism? 
\end{question}

\begin{question}
Does $\PFA (\aleph_1$-sized posets) suffice to prove $\BA$? What about $\MM(\aleph_1$-sized posets)?
\end{question}
Note that Baumgartner's proof provides in $\ZFC$ a proper forcing of size $\cc$ which generically adds isomorphisms. In full generality these questions remain open however in this paper we provide the following partial answer.
\begin{maintheorem} \label{maintheorem1}
For any regular cardinal $\kappa > \aleph_1$ it is consistent that $\MA + \cc = \kappa$ holds and there is an $\aleph_1$-dense $A \subseteq \mathbb R$ so that any forcing notion of size $\aleph_1$ which generically adds an isomorphism between $A$ and its reverse ordering must necessarily collapse $\aleph_1$. 
\end{maintheorem}
In particular we have the following immediate corollary.

\begin{corollary}
It is consistent with $\ZFC$ that there are $A$ and $B$ which are $\aleph_1$-dense sets of reals so that no proper forcing of size less than continuum can add an isomorphism between them. 
\end{corollary}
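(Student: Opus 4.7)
The plan is to deduce the Corollary as an essentially formal consequence of the Main Theorem, by specialising the parameter $\kappa$ and invoking the fact that proper forcing preserves $\aleph_1$.

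First I would apply the Main Theorem with $\kappa = \aleph_2$, producing a model in which $\MA + \cc = \aleph_2$ holds together with an $\aleph_1$-dense $A \subseteq \R$ enjoying the stated rigidity property. I would then set $B := A^*$, which is again an $\aleph_1$-dense set of reals. In this model the phrase ``of size less than the continuum'' is exactly ``of size $\leq \aleph_1$'', so I would suppose for contradiction that $\P$ is a proper forcing of size at most $\aleph_1$ generically producing an order isomorphism $\name{f} \colon A \to B$.

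If $|\P| = \aleph_1$, the Main Theorem applies verbatim and concludes that $\P$ must collapse $\aleph_1$, contradicting properness. For the remaining case $|\P| \leq \aleph_0$, I would reduce to the previous case by padding $\P$ with $\aleph_1$ many harmless atoms (for instance, by taking a disjoint sum with an $\aleph_1$-sized antichain) to obtain a proper forcing of size exactly $\aleph_1$ which still generically produces $\name{f}$. Alternatively, since a countable forcing is ccc, one can apply $\MA_{\aleph_1}$ directly to the $\aleph_1$-many dense sets deciding $\name{f}$ to produce an isomorphism $A \to A^*$ already in $V$; this then contradicts the Main Theorem applied to the trivial size-$\aleph_1$ forcing, which otherwise would have ``added'' the ground-model isomorphism without collapsing anything.

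The only genuine subtlety is confirming that the statement ``forcing of size $\aleph_1$'' in the Main Theorem comfortably accommodates countable forcings as well, and this is settled either by the padding argument or by the direct $\MA$ appeal sketched above. All the substantive difficulty is carried by the Main Theorem itself; the Corollary is essentially a translation, trading the quantitative conclusion ``collapses $\aleph_1$'' against the qualitative hypothesis ``proper''.
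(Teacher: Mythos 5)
Your deduction is correct and is exactly the (unwritten) argument the paper intends when it calls this an ``immediate corollary'' of Main Theorem \ref{maintheorem1}: take $\kappa=\aleph_2$, set $B=A^*$, and use that proper forcings preserve $\aleph_1$. One small caveat: padding a countable $\P$ by a lottery sum with an $\aleph_1$-sized antichain does not work as stated, since below the added atoms the forcing no longer forces the existence of the isomorphism, so the hypothesis of the Main Theorem fails for the padded poset; but your alternative argument for that corner case (a countable forcing is ccc, so $\MA_{\aleph_1}$ applied to the $\aleph_1$ many deciding dense sets yields an isomorphism $A\cong A^*$ in $V$, contradicting that $A$ is far from $A^*$ there) is sound, so the proof goes through.
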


We remark in relation to this that in \cite{MooreTod17} it was shown that forcing $\MA_{\aleph_2}$ in the natural way forces that there is an $\aleph_1$-dense linear order that cannot be made isomorphic to its reverse in any outer model of the universe with the same $\omega_2$, which provides a partial answer to the questions above as well. The results above allow us to give therefore an alternative proof of the observation made concerning the proof of \cite[Theorem 1.4]{MooreTod17} made in the introduction to that paper.

\begin{corollary}[{Moore-Todorcevic, See \cite[Theorem 1.4]{MooreTod17} and related discussions}]
$\MA + \neg \CH$ is consistent with an $\aleph_1$-dense linear order that cannot be made isomorphic to its reverse ordering by any $\aleph_1$-sized forcing notion which does not collapse $\omega_1$. 
\end{corollary}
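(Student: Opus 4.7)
The plan is to derive the corollary as a direct consequence of Main Theorem \ref{maintheorem1}. Fix any regular cardinal $\kappa > \aleph_1$ (for concreteness, $\kappa = \aleph_2$) and apply the Main Theorem to obtain a forcing extension $V[G]$ in which $\MA + \cc = \kappa$ holds, together with a designated $\aleph_1$-dense set $A \subseteq \mathbb{R}$ satisfying: every forcing notion of size $\aleph_1$ that generically adds an order isomorphism between $A$ and its reverse ordering $A^*$ must collapse $\aleph_1$. Since $\cc = \kappa > \aleph_1$ in $V[G]$, we automatically have $\neg\CH$, while $\MA$ is asserted directly by the theorem.

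The corollary now follows by contraposition. Suppose towards a contradiction that, inside $V[G]$, there were an $\aleph_1$-sized forcing notion $\mathbb{P}$ which does not collapse $\omega_1$ and which forces $A$ and $A^*$ to become order-isomorphic. Then for any $V[G]$-generic filter $H \subseteq \mathbb{P}$, the extension $V[G][H]$ contains an order isomorphism $f : A \to A^*$. This is precisely what it means for $\mathbb{P}$ to generically add an isomorphism between $A$ and $A^*$ in the sense of the Main Theorem. But the Main Theorem, applied in $V[G]$, then forces $\mathbb{P}$ to collapse $\aleph_1$, contradicting our assumption that $\omega_1$ is preserved.

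Thus in $V[G]$ the linear order $A$ witnesses the corollary: it is an $\aleph_1$-dense set of reals, $\MA + \neg\CH$ holds, and no $\aleph_1$-sized forcing preserving $\omega_1$ can make $A$ isomorphic to its reverse ordering. The only substantive content lies in the Main Theorem; the corollary itself is just its contrapositive reformulation, and accordingly presents no separate obstacle. The real difficulty, treated in the body of the paper, is the simultaneous construction of the set $A$ and the model $V[G]$ so that the rigidity of $A$ is preserved against all $\aleph_1$-sized $\omega_1$-preserving forcings at once.
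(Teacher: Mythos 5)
Your proposal is correct and matches the paper's (implicit) argument: the corollary is presented there as an immediate consequence of Main Theorem \ref{maintheorem1}, obtained exactly as you describe by fixing a regular $\kappa>\aleph_1$, noting $\cc=\kappa$ gives $\neg\CH$, and reading the conclusion as the contrapositive of the collapsing statement. No further content is needed.
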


Our proofs actually establish quite a bit more. The definition of a {\em good} linear order is given below, see Definition \ref{good definition}. For the purposes of the introduction we note that any set of uncountably many mutually generic Cohen reals is good. We show that if there are good, $\aleph_1$-dense linear orders, then under $\MA$ they necessarily have this indestructibility property. 

\begin{maintheorem} \label{mainthm2}
Assume $\MA_{\aleph_1}$. If $A \subseteq\mathbb R$ is good and $\aleph_1$-dense then it cannot be made isomorphic to its reverse order by any forcing notion of size $\aleph_1$ which does not collapse $\aleph_1$. 
\end{maintheorem}

Already in \cite{Sh:106} it was shown by the second author and Avraham that $\MA + 2^{\aleph_0} = \kappa$ is consistent with a good $\aleph_1$-dense linear order for any regular $\kappa$. This fact alongside Main Theorem \ref{mainthm2} implies Main Theorem \ref{maintheorem1}. 

The rest of this paper is organized as follows. In the next section we introduce some combinatorial properties of $\aleph_1$-dense linear orders which will be used in the proof of Main Theorem \ref{maintheorem1}. The following section proves that if $\MA$ holds alongside the existence of a linear order satisfying a certain combinatorial property, {\em slicewise coverable}, described in \S \ref{section: combinatorics}, then the conclusion of Main Theorem \ref{maintheorem1} holds. Finally in \S \ref{section: construction} we provide a model in which $\MA$ holds alongside such a linear order hence proving Main Theorem \ref{maintheorem1}. A final section concludes with some further observations and open questions. 

Throughout, our notation is standard, conforming to that of the monographs \cite{jechSetTheory2003} or \cite{kunenSetTheory2011}. We fix some notation for linear orders which is slightly less standard or well known. First, given a linear order $A \subseteq \mathbb R$ its {\rm reverse ordering} is denoted $A^* = \{-r: r\in A\}$. Such an order is {\em reversible} if $A \cong A^*$. Clearly $\BA$ implies all $\aleph_1$-dense linear orders are reversible. Next we say that two linear orders $A, B \subseteq \mathbb R$ are {\em near} if there are uncountable subsets $A' \subseteq A$ and $B' \subseteq B$ so that $A' \cong B'$. If $A$ and $B$ are not near then we say they are \emph{far}. Finally, let us say that a map $f:A \to B$ (for linear orders $A$ and $B$) is {\em increasing} if $a < b$ implies $f(a) < f(b)$. A map $f:A \to B$ is {\em decreasing} or {\em order reversing} if $a < b$ implies $f(b) < f(a)$. A map $f:A \to B$ is {\em monotone} if it is either increasing or decreasing.

\section{Anti-reversibility conditions} \label{section: combinatorics}
The key steps in the proof of Main Theorem \ref{maintheorem1} require the construction of a linear order which is very far from being reversible. In \cite{Sh:106} the following concept was used, though not given a name.

\begin{definition}
Let $A \subseteq \mathbb R$ be an $\aleph_1$-dense linear order. We say that $A$ is {\em essentially increasing} if every partial, uncountable injection $f:A \rightharpoonup A$ is increasing on some uncountable subset. 

\end{definition}
Obviously, if $A$ is essentially increasing then it is far from $A^*$. Moreover, the existence of an essentially increasing $A$ implies that no ccc forcing notion can force $A$ and $A^*$ to be near, see \cite[p. 106]{Sh:106}. In \cite{Sh:106} the following was proved.
\begin{theorem}[Avraham-Shelah, \cite{Sh:106}]
For any regular $\kappa > \aleph_1$ it is consistent that $\cc = \kappa$, $\MA$ holds and there is an essentially increasing linear order. 
\end{theorem}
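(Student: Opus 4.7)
The plan is to start with a ground model $V$ of $\GCH$, construct in $V$ an essentially increasing $\aleph_1$-dense $A \subseteq \R$, and then perform the standard finite-support iteration of ccc forcings of size ${<}\kappa$ of length $\kappa$ to force $\MA + \cc = \kappa$. The main task is to verify that essentially increasing survives the iteration.

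The ground-model construction of $A$ uses $\CH$: enumerate in order-type $\omega_1$ all reasonable candidates for uncountable partial injections on the final $A$ (really, their countable approximations). Build $A$ as the union of a continuous chain of countable sets $A_\alpha \subseteq \R$, at each successor stage adding countably many reals so that (i) in the limit $A$ meets every nonempty real open interval in $\aleph_1$ points, and (ii) the $\alpha$-th candidate is diagonalized against, i.e., some point is placed that prevents it from extending to an uncountable partial injection on $A$ which is order-reversing on any uncountable subset.

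The central step is the preservation lemma implicit in \cite{Sh:106}: if $Q$ is ccc in $V$ and $A \in V$ is essentially increasing in $V$, then $A$ remains essentially increasing in $V^Q$. Given a $Q$-name $\dot f$ for an uncountable partial injection $A \rightharpoonup A$, ccc forces each set $X_a = \{b \in A : \exists q \in Q,\ q \Vdash \dot f(a) = b\}$ to be countable in $V$. Fix in $V$ an enumeration $X_a = \{b_{a,n} : n < \omega\}$. In a generic extension $V[G]$, the equation $\dot f(a) = b_{a,n(a)}$ defines $n \colon \dom \dot f \to \omega$, and by pigeonhole some fiber $\{a : n(a) = n\}$ is uncountable. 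A maximal-antichain argument in $V$ reflects this: there exist $q^* \in Q$, $n < \omega$, and an uncountable $V$-set $T \subseteq A$ with $q^* \Vdash T \subseteq \{a : \dot f(a) = b_{a,n}\}$; otherwise, $\dom \dot f[G]$ would be covered in $V[G]$ by countably many forced-countable sets, contradicting its uncountability. The $V$-function $h_n \colon T \to A$, $a \mapsto b_{a,n}$, has countable fibers by ccc, so it is injective on some uncountable $T' \subseteq T$, and essentially increasing in $V$ applied to $h_n \restriction T'$ furnishes an uncountable $T'' \subseteq T'$ on which $h_n$ is increasing. Below $q^*$, $\dot f$ and $h_n$ agree on $T''$, so $\dot f$ is increasing on $T''$. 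Finite-support ccc limits behave well by the standard fact that names for uncountable subsets of $A$ are realized cofinally below some bounded initial stage of the iteration.

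The main obstacle is the reflection step: moving from the $V[G]$-pigeonhole to a single condition $q^*$ and a $V$-uncountable set $T$, rather than merely to a name. This is precisely where ccc enters via the maximal-antichain/covering argument above, and is the technical heart of the Avraham--Shelah preservation lemma; once it is in place, iterating is routine and standard bookkeeping for $\MA$ can be done without interfering with $A$.
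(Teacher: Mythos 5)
The proposal has a fatal gap: the preservation lemma you rely on --- \emph{every} ccc forcing preserves ``essentially increasing'' --- is false, and your plan of iterating \emph{all} ccc posets of size ${<}\kappa$ therefore cannot work. Your ground model satisfies $\CH$, so by Baumgartner's theorem (quoted in the introduction of this paper) there is a ccc poset of size $\aleph_1$ forcing $A\cong A^*$; the generic isomorphism is a total order-reversing injection of $A$ into itself, so $A$ is not essentially increasing in that extension. The standard $\MA$ bookkeeping will hand you exactly such a poset at some stage. The precise step of your argument that fails is the ``reflection'': from ``$\{a:\dot f(a)=b_{a,n}\}$ is uncountable in $V[G]$'' you cannot extract a single condition $q^*$ and an uncountable \emph{ground-model} set $T$ with $q^*\Vdash \check T\subseteq\{a:\dot f(a)=b_{a,n}\}$. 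A ccc forcing can add an uncountable set containing no uncountable ground-model subset, and indeed the graph of a generically added order-reversing map must have this property whenever $A$ is essentially increasing in $V$ --- if your reflection step were valid it would prove that no ccc forcing can ever add such a map, contradicting Baumgartner. Your justification (``otherwise $\dom\dot f$ would be covered by countably many forced-countable sets'') conflates ``contains no uncountable ground-model subset'' with ``is countable''; the negation of the reflection statement does not bound the size of the fibers. (The $\CH$-diagonalization of the ground-model $A$ is also too vague --- there are $2^{\aleph_1}$ uncountable partial injections and it is not explained how countable approximations control them all --- but this is secondary.)

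The actual route, due to Avraham--Shelah and reproduced in the machinery of this paper, is structurally different in exactly the place where your argument breaks. One works with a Ramsey-type property, \emph{goodness} (Definition \ref{good definition}), witnessed for instance by $\aleph_1$ mutually generic Cohen reals, which is robust enough to be preserved --- but only by the restricted class of \emph{appropriate} posets (Definition \ref{ap}), not by all ccc posets. One iterates only the appropriate posets (preservation through the finite-support iteration is Lemmas \ref{appropriate lemma} and \ref{iteration}); the punchline of \cite{Sh:106} is that in the resulting model every ccc poset is in fact appropriate, so full $\MA$ holds even though one never forced with the ``bad'' ccc posets. Finally, goodness alone does not give essential increasingness; one uses $\MA$ applied to a ccc poset of finite increasing partial subfunctions (as in Definition \ref{Q_A}) to turn the pairwise Ramsey property into an uncountable increasing restriction. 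Your proposal is missing all three of these ingredients: the restricted iteration, the argument that $\MA$ nevertheless holds in full, and the final $\MA$ application.
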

We will need a strengthening of being essentially increasing, which we call being slicewise coverable. Below we say that a {\em filtration} of an $\aleph_1$-dense linear order $A$ is a $\subseteq$-increasing and continuous sequence $\vec{A} = \{A_\alpha: \alpha \in \omega_1\}$ so that the following are satisfied.
\begin{enumerate}
\item $A_0 = \emptyset$
\item For all $\alpha < \omega_1$ we have that $A_{\alpha+1} \setminus A_\alpha$ is a countable, dense subset of $A$.
\item $A = \bigcup_{\alpha < \w_1} A_\alpha$
\end{enumerate}
Note that if $\vec{A}$ is a filtration of $A$ then the set $\{A_{\alpha+1}\setminus A_\alpha: \alpha \in \omega_1\}$ forms a partition of $A$ into $\omega_1$-many countable, dense sets. In this set up let us refer to each $A_{\alpha+1}\setminus A_\alpha$ as a {\em slice} of the the filtration.

We now define slicewise coverability.

\begin{definition}
An $\aleph_1$-dense linear order is said to be {\em slicewise coverable} if given any filtration $\vec{A}$ of $A$ there are countably many partial increasing functions $\{f_n\}_{n < \omega}$ with $f_n : A \rightharpoonup A$ so that $\bigcup_{n < \omega} f_n = \bigcup_{\alpha < \omega_1} (A_{\alpha+1} \setminus A_\alpha) \times (A_{\alpha+1} \setminus A_\alpha)$.
\end{definition}
Note that the above says concretely that for every $n < \omega$ and every $\alpha \in \omega_1$ if $a \in {\rm dom}(f_n) \cap (A_{\alpha+1} \setminus A_\alpha)$ then $f_n(a) \in (A_{\alpha+1} \setminus A_\alpha)$ and, moreover every pair $(a, b) \in (A_{\alpha+1} \setminus A_\alpha)^2$ appears in this way for some $n < \omega$. In other words, each square of a slice is covered by the graphs of the $f_n$'s. 

\begin{lemma}
Suppose $A \subseteq \mathbb R$ is $\aleph_1$-dense. If $A$ is slicewise coverable, then it  is essentially increasing. 
\end{lemma}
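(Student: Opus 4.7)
The plan is to show directly that, given a partial uncountable injection $f : A \rightharpoonup A$, I can produce an uncountable $X \subseteq \dom(f)$ on which $f$ is increasing. The strategy is to design a filtration $\vec A$ of $A$ tailored to $f$ so that for every $\alpha < \omega_1$ some point $c_\alpha \in \dom(f)$ lies in the slice $A_{\alpha+1} \setminus A_\alpha$ together with its image $f(c_\alpha)$. Once this is done, slicewise coverability applied to $\vec A$ yields countably many increasing partial maps $\{f_n : n < \omega\}$ whose graphs cover $\bigcup_\alpha (A_{\alpha+1} \setminus A_\alpha)^2$, and a pigeonhole argument then delivers an uncountable set on which $f$ coincides with a single increasing $f_n$.

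Concretely, I would enumerate $\dom(f) = \{c_\xi : \xi < \omega_1\}$ and $A = \{b_\xi : \xi < \omega_1\}$, and build $\vec A$ by recursion: $A_0 = \emptyset$, $A_\delta = \bigcup_{\alpha < \delta} A_\alpha$ at limits, and at a successor $\alpha+1$ first choose $c_\alpha \in \dom(f) \setminus A_\alpha$ with $f(c_\alpha) \notin A_\alpha$ (possible because $\dom(f)$ is uncountable, $A_\alpha$ is countable, and $f$ is injective, so in fact the set of such $c_\alpha$ is uncountable). Then, using $\aleph_1$-density of $A$, pick a countable $D_\alpha \subseteq A \setminus A_\alpha$ that is dense in $A$, and set $A_{\alpha+1} := A_\alpha \cup D_\alpha \cup \{c_\alpha, f(c_\alpha), b_\alpha\}$. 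The new slice $A_{\alpha+1} \setminus A_\alpha$ is then a countable dense subset of $A$ that contains both $c_\alpha$ and $f(c_\alpha)$, and the inclusion of $b_\alpha$ at stage $\alpha$ ensures $A = \bigcup_{\alpha < \omega_1} A_\alpha$.

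Finally, I invoke slicewise coverability on $\vec A$ to produce increasing partial functions $f_n : A \rightharpoonup A$, $n < \omega$, with $\bigcup_{n < \omega} f_n = \bigcup_\alpha (A_{\alpha+1} \setminus A_\alpha)^2$. For each $\alpha < \omega_1$ the pair $(c_\alpha, f(c_\alpha))$ lies in this union, so $f_{n(\alpha)}(c_\alpha) = f(c_\alpha)$ for some $n(\alpha) < \omega$. By pigeonhole, some fixed $n$ has $X := \{c_\alpha : n(\alpha) = n\}$ uncountable, and on $X$ the function $f$ agrees with the increasing $f_n$, so $f \restr X$ is increasing, as required. I anticipate no real obstacle: the only step that needs any care is the choice of $c_\alpha$ in the recursion, and that is immediate from the uncountable/countable cardinality gap together with the injectivity of $f$.
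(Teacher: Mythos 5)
Your proof is correct and follows essentially the same route as the paper's: construct a filtration of $A$ adapted to $f$, apply slicewise coverability to it, and finish by pigeonhole over the countably many increasing functions. The only difference is in how the filtration is obtained --- the paper takes $A_\alpha = M_\alpha \cap A$ for a continuous chain of countable elementary submodels containing $f$, so that every slice is closed under $f$ and the entire graph of $f$ is covered, whereas you build the filtration by direct recursion so that each slice captures a single pair $(c_\alpha, f(c_\alpha))$; both implementations work.
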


\begin{proof}
Assume $A$ is a slicewise coverable, $\aleph_1$-dense linear order. Let $f:A \rightharpoonup A$ be a partial, uncountable function. Take an $\in$-increasing and continuous chain $\seq{M_\alpha:\alpha<\w_1}$ of countable elementary submodels of some large $H_\theta$ with $f,A\in M_0$. Let $A_\alpha:=M_\alpha\cap A$. This produces a filtration of $A$ so that each slice if closed under $f$, by elementarity. 

    By assumption there are countably many $f_n:A\rightharpoonup A$, each of which is a partial increasing function so that $\bigcup_{n<\w} f_n= \bigcup_{\alpha<\w_1}[(A_{\alpha+1}\setminus A_\alpha)\times (A_{\alpha+1}\setminus A_\alpha)]$ and therefore in particular we can cover the graph of $f$ by these functions since each $A_{\alpha+1} \setminus A_\alpha$ was closed under $f$. Since $f$ is uncountable however that means some $n < \omega$ we have that $f_n \cap f$ is uncountable, which completes the proof. 
\end{proof}

We will need a separate combinatorial property of linear orders which will be useful in constructing slicewise coverable linear orders. 

\begin{definition} \label{good definition}
Let $A$ be an $\aleph_1$-dense set of reals. We say that $A$ is \emph{good} if there is an injective enumeration $A=\{r_\xi:\xi<\w_1\}$ so that the following holds: for every $n<\w$ and every $\seq{a_\alpha:\alpha<\w_1}$ with $a_\alpha\in [\w_1]^n$, there exist $\alpha<\beta$ such that $r_{a_\alpha(i)}\le r_{a_\beta(i)}$ for every $i<n$.
\end{definition}

In what follows, when we write ``$A=\{r_\xi:\xi<\omega_1\}$ is good", we mean that $\seq{r_\xi:\xi<\omega_1}$ is an (injective) enumeration witnessing the goodness of $A$.

The following is shown on \cite[p. 164]{Sh:106}.
\begin{lemma}
Let $A = \{r_\alpha : \alpha < \omega_1\}$ be a set of mutually generic Cohen reals over a fixed ground model $V$. Then $A$ is good in $V[A]$.
\end{lemma}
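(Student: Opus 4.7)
The plan is to carry out a density argument in $\Add(\omega,\omega_1)$ using the natural indexing of $A$. Suppose toward a contradiction that some $p \in \Add(\omega,\omega_1)$ forces that this enumeration fails to be good; then we may fix $n < \omega$ and a name $\seq{\dot a_\alpha : \alpha < \omega_1}$ for a sequence in $[\omega_1]^n$ such that $p$ forces that for no $\alpha < \beta$ does $r_{\dot a_\alpha(i)} \le r_{\dot a_\beta(i)}$ hold for every $i < n$. For each $\alpha < \omega_1$ choose $p_\alpha \le p$ and some specific $a_\alpha \in [\omega_1]^n \cap V$ with $p_\alpha \Vdash \dot a_\alpha = \check a_\alpha$.

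Next, apply the $\Delta$-system lemma and standard uniformization to the ground-model sequence $\seq{(p_\alpha,a_\alpha) : \alpha < \omega_1}$ to extract an uncountable $X \subseteq \omega_1$ enjoying the following coherence: the sets $u_\alpha := \supp(p_\alpha) \cup a_\alpha$ form a $\Delta$-system with root $u^*$; the restriction $p_\alpha \rest (u^* \times \omega)$ equals a fixed condition $p^*$ for every $\alpha \in X$; the enriched structure $(u_\alpha;\le,\supp(p_\alpha),a_\alpha)$ has a fixed isomorphism type along $X$; and, crucially, for each position $i < n$ the finite function $p_\alpha \rest (\{a_\alpha(i)\}\times\omega)$ is a fixed finite function $q_i$, the same for all $\alpha \in X$.

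Pick any $\alpha < \beta$ in $X$ and set $q := p_\alpha \cup p_\beta$. The two conditions agree on $u^*$ and have otherwise disjoint supports, so $q$ is a valid extension of $p$ forcing $\dot a_\alpha = a_\alpha$ and $\dot a_\beta = a_\beta$. Let $a^* := a_\alpha \cap u^* = a_\beta \cap u^*$ and let $I \subseteq n$ be the fixed set of positions at which $a^*$ sits inside $a_\alpha$. For $i \in I$ one has $a_\alpha(i) = a_\beta(i)$ and the target inequality is automatic. For $i \in n \setminus I$ the coordinates $c_i := a_\alpha(i)$ and $d_i := a_\beta(i)$ are distinct, and by the last uniformization clause $q$ decides the same finite set of bit-values at $c_i$ and at $d_i$, namely the pattern $q_i$. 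Choose $m < \omega$ strictly larger than every bit position decided by $q$ at any such coordinate, and extend $q$ to $q'$ by setting, for each $i \in n \setminus I$, values $q'(c_i,j) = q'(d_i,j)$ for every $j < m$ (which is consistent because $q$'s existing decisions at $c_i$ and $d_i$ already coincide), together with $q'(c_i,m) = 0$ and $q'(d_i,m) = 1$. Across varying $i$ the coordinates $c_i, d_i$ are pairwise distinct, so these assignments do not conflict. Then $q' \le p$ forces $r_{c_i} < r_{d_i}$ for every $i \in n \setminus I$ and hence $r_{a_\alpha(i)} \le r_{a_\beta(i)}$ for every $i < n$, contradicting the choice of $p$.

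The only delicate point is the final uniformization clause: unless the bit pattern that $p_\alpha$ imposes at the coordinate $a_\alpha(i)$ is independent of $\alpha$, the comparison between $r_{c_i}$ and $r_{d_i}$ could already be sealed by $p_\alpha \cup p_\beta$ in the wrong direction, and the extension step would break. This is arranged by a further countable thinning after the $\Delta$-system and the order-type fixing, using that each $p_\alpha$ is finite so only finitely many patterns are possible at each position; once this is in place, the remainder is a straightforward Cohen-forcing density calculation.
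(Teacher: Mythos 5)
Your argument is correct, and it is essentially the standard one: the paper does not reprove this lemma but cites \cite[p.~164]{Sh:106}, where the same strategy appears --- decide the names, thin via a $\Delta$-system and uniformization of the finite bit patterns at the relevant columns, and then extend the amalgamated condition to force the desired inequalities coordinatewise. The one step worth making fully explicit in a write-up is that the root $a^* = a_\alpha \cap u^*$ and the position set $I$ are stabilized by a further finite-pigeonhole thinning (not merely by fixing the isomorphism type), but you effectively acknowledge this and it is routine.
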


Being good can be preserved by finite support iterations. This was explained on \cite[p. 166]{Sh:106} but we provide a proof to make this presentation more self contained. First we need to define the class of posets preserving goodness.

\begin{definition}\label{ap}
Let $A=\{r_\xi:\xi<\w_1\}$ be good. A forcing poset $\P$ is said to be \emph{appropriate (for $A$)} iff for all $n\in\w$ and $\seq{(p_\alpha,a_\alpha):\alpha<\w_1}$ where $p_\alpha\in\P$ and $a_\alpha\in[\w_1]^n$, there exist $\alpha<\beta$ such that $p_\alpha\parallel p_\beta$ and $r_{a_\alpha(i)}\le r_{a_\beta(i)}$ for all $i<n$.
\end{definition}

Observe that being appropriate implies ccc by ignoring the $a_\alpha$ part. Also, by \cite[Lemma 12]{Sh:106}, if $A$ is good and $\P$ is appropriate then $\P$ forces that $A$ is still good. In fact this is an equivalent characterization.
\begin{lemma}\label{appropriate lemma}
Let $\P$ be a partial order and let $A$ be good. The following are equivalent.
\begin{enumerate}
\item $\P$ is appropriate for $A$.
\item $\P$ is ccc and preserves the goodness of $A$.
\end{enumerate}
\end{lemma}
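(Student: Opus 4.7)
The direction $(1)\Rightarrow(2)$ is essentially recorded in the preceding discussion: specialising Definition \ref{ap} to $n=0$ forces every $a_\alpha$ to be empty, so what remains is exactly the ccc, while preservation of goodness under appropriate forcing is \cite[Lemma 12]{Sh:106}. The substance of the lemma is therefore the converse $(2)\Rightarrow(1)$, which I plan to prove by running the goodness of $A$ inside the generic extension and reflecting the resulting witness back to $V$ by absoluteness.

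Fix $n<\omega$ and a sequence $\langle (p_\alpha,a_\alpha):\alpha<\omega_1\rangle$ with $p_\alpha\in\P$ and $a_\alpha\in[\omega_1]^n$. Let $\dot G$ be the canonical name for the generic filter, and consider the $\P$-name
\[
\dot I \;=\; \{\alpha<\omega_1 : p_\alpha\in\dot G\}.
\]
The key step is to exhibit a condition $p^*\in\P$ forcing $|\dot I|=\aleph_1$. Granting this, pass to a $V$-generic $G$ containing $p^*$: because $\P$ preserves the goodness of $A$, the sequence $\langle a_\alpha:\alpha\in\dot I^G\rangle$, enumerated in the natural order of $\alpha$, has length $\omega_1$ in $V[G]$ and lives in $[\omega_1]^n$, so goodness applied in $V[G]$ produces $\alpha<\beta$ in $\dot I^G$ with $r_{a_\alpha(i)}\le r_{a_\beta(i)}$ for every $i<n$. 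Both $p_\alpha$ and $p_\beta$ lie in $G$, hence are compatible in $V[G]$; since compatibility in the ground-model poset $\P$ is absolute, the same pair $(\alpha,\beta)$ witnesses appropriateness already in $V$.

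The step that requires real work, and where I expect the main (mild) obstacle to lie, is the ccc claim that some $p^*$ forces $|\dot I|=\aleph_1$. Suppose for contradiction that $\Vdash|\dot I|\le\aleph_0$. Because $\P$ preserves $\omega_1$, this gives $\Vdash\sup\dot I<\omega_1$, and a standard ccc bookkeeping argument bounds this supremum by a ground-model ordinal: for each value that $\sup\dot I$ can take one picks a condition forcing that value, the resulting conditions form an antichain in $\P$, hence only countably many values occur and some $\xi^*<\omega_1$ satisfies $\Vdash\dot I\subseteq\xi^*$. This forces $p_{\xi^*}\notin\dot G$, contradicting the triviality $p_{\xi^*}\Vdash p_{\xi^*}\in\dot G$. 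Once this claim is settled, the rest of the argument is pure absoluteness and requires no further ingenuity.
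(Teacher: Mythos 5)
Your proposal is correct and follows essentially the same route as the paper's proof: forward direction by the $n=0$ specialisation plus the cited preservation lemma, and the converse by naming $\{\alpha : p_\alpha \in \dot G\}$, using the ccc to find a condition forcing it uncountable, applying goodness in the extension, and pulling the compatible pair back by absoluteness. No gaps.
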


\begin{proof}
(A)$\implies$ (B): See \cite[Lemma 12]{Sh:106}.
 
(B)$\implies$ (A): Fix a natural number $n < \omega$ and let $\{(p_\alpha, a_\alpha): \alpha \in \omega_1\}$ be a set of pairs with $p_\alpha \in \P$ and $a_\alpha \in [\omega_1]^n$ as in the definition of goodness. Let $\dot{Z}$ be a $\P$-name for the set of $\alpha<\omega_1$ so that $p_\alpha \in \dot{G}$. 

\begin{claim}
There is a $p \in \P$ which forces that $\dot{Z}$ is uncountable. 
\end{claim}

\begin{proof}[Proof of Claim]
Otherwise $\forces_\P$``$\dot{Z}$ is countable". By the ccc, there is some countable ordinal $\gamma$ so that $\forces_\P \dot{Z} \subseteq \check{\gamma}$. But this is not possible since for any $p_\alpha$ with $\alpha > \gamma$ we have that $p_\alpha \forces \check\alpha \in \dot{Z}$.
\end{proof}

Fix now a condition $p \in \P$ forcing that $\dot{Z}$ is uncountable. Let $G \ni p$ be generic over $V$ and work in $V[G]$. By assumption, $A$ is still good. Let $Z = \dot Z^G$. Consider now the set $\{(p_\alpha, a_\alpha): \alpha \in Z\}$. Since $A$ is good (in $V[G]$), there are $p_\alpha$ and $p_\beta$ in $G$ so that $r_{a_\alpha(i)}\le r_{a_\beta(i)}$ for all $i<n$. Also, since $p_\alpha$ and $p_\beta$ are both in $G$, they are compatible. But now, back in $V$ we have that $\alpha$ and $\beta$ witness compatibility. 
\end{proof}

We now sketch the aforementioned preservation result. 

\begin{lemma} \label{iteration}
    Finite support iterations of appropriate posets are appropriate.
\end{lemma}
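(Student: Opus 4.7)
The plan is to argue by induction on the length $\delta$, showing that any finite support iteration $\P_\delta = \seq{\P_\alpha, \dot{\mathbb Q}_\alpha : \alpha < \delta}$ whose iterands are each forced to be appropriate for $A$ is itself appropriate for $A$. The base case $\delta = 0$ is trivial, and throughout I would rely heavily on Lemma \ref{appropriate lemma}, which exchanges the slightly intricate defining condition for the more familiar conjunction ``ccc $+$ preserves the goodness of $A$''; both of these behave well in finite support iterations, so this reformulation is the main conceptual simplification.

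For the successor step $\delta = \eta + 1$, with $\P_\delta \cong \P_\eta * \dot{\mathbb Q}_\eta$, Lemma \ref{appropriate lemma} applied inductively tells me that $\P_\eta$ is ccc and preserves goodness of $A$, and that (interpreted inside $V^{\P_\eta}$, where $A$ remains good) $\dot{\mathbb Q}_\eta$ is forced to be ccc and to preserve goodness. The classical two-step lemma gives that $\P_\eta * \dot{\mathbb Q}_\eta$ is ccc, while preservation of goodness telescopes along $V \subseteq V[G_\eta] \subseteq V[G_\eta][H]$. One final application of Lemma \ref{appropriate lemma} then closes the successor case.

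The limit case I would handle directly from the definition of appropriateness via a $\Delta$-system argument. Given $n < \w$ and $\seq{(p_\alpha, a_\alpha) : \alpha < \w_1}$ with $p_\alpha \in \P_\delta$ and $a_\alpha \in [\w_1]^n$, the finite supports $\supp(p_\alpha) \subseteq \delta$ form, after passing to an uncountable refinement $S \subseteq \w_1$, a $\Delta$-system with some finite root $r$. Since $\delta$ is a limit, $\gamma := \max(r) + 1$ is strictly below $\delta$, so the restrictions $q_\alpha := p_\alpha \restr \gamma$ lie in $\P_\gamma$, to which the inductive hypothesis applies: it yields $\alpha < \beta$ in $S$ with $q_\alpha \parallel q_\beta$ in $\P_\gamma$ and $r_{a_\alpha(i)} \le r_{a_\beta(i)}$ for all $i < n$. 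To lift compatibility from $\P_\gamma$ up to $\P_\delta$, I would take a common extension $s \le q_\alpha, q_\beta$ in $\P_\gamma$ and glue on the tails of $p_\alpha$ and $p_\beta$ above $\gamma$; the $\Delta$-system property ensures that these tails have disjoint supports in $[\gamma, \delta)$, so the glued condition is unambiguously defined and extends both $p_\alpha$ and $p_\beta$.

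The most delicate point, and the one I would spend the most care on, is this gluing: verifying that the piecewise-defined $t$ satisfies $t \restr i \forces t(i) \in \dot{\mathbb Q}_i$ at each coordinate $i \ge \gamma$. The disjoint-tails property removes any direct conflict between the two contributions, and then the observation that $t \restr i$ refines the restriction to $i$ of whichever of $p_\alpha, p_\beta$ contributed $t(i)$ transfers the relevant forcing relation. Beyond this bookkeeping I do not foresee further obstacles.
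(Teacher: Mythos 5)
Your proof is correct and follows essentially the same route as the paper: induction on the length, handling successors via Lemma \ref{appropriate lemma} together with the two-step preservation of goodness, and handling limits by a $\Delta$-system argument that reduces compatibility to a $\P_\gamma$ below the root and then lifts it using the disjointness of the supports above $\gamma$. The only difference is that you spell out the gluing verification at the limit stage, which the paper leaves implicit.
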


\begin{proof}
    Let $\langle \P_\alpha, \dot\Q_\alpha : \alpha < \delta\rangle$ be a finite support iteration of some ordinal length $\delta$ so that for each $\alpha < \delta$ we have that $\forces_\alpha$``$\dot\Q_\alpha$ is appropriate". We want to show by induction on $\delta$ that $\P_\delta$ is appropriate. Note that since appropriate forcing notions are ccc we know that $\P_\delta$ is ccc. 
    
    The successor stage follow from Lemma \ref{appropriate lemma} above since clearly being good is preserved by two-step iterations. Let us therefore assume that $\delta$ is a limit ordinal and for all $\alpha < \delta$ we have $\P_\alpha$ is appropriate. Let $\{(p_\alpha, a_\alpha):\alpha \in \omega_1\}$ be a sequence of pairs consisting of a condition $p_\alpha$ and a sequence $a_\alpha$ of countable ordinals of some fixed size $n < \omega$. By thinning out, we can find an uncountable set $X \in [\omega_1]^{\omega_1}$ so that the set $\{{\rm supp} (p_\alpha) : \alpha \in X\}$ forms a $\Delta$-system. Let $\gamma < \delta$ be an ordinal above the maximum of the root. Consider now the set $\{(p_\alpha \restr \gamma, a_\alpha)$. Since $\P_\gamma$ is assumed to be appropriate we can find $\alpha < \beta \in X$ so that $p_\alpha \restr \gamma\parallel p_\beta \restr\gamma$ and $r_{a_\alpha(i)}\le r_{a_\beta(i)}$ for all $i<n$. Now however it follows that $p_\alpha$ and $p_\beta$ are compatible which completes the proof of this case.
\end{proof}

\section{A Sufficient Condition}

In this section we prove a sufficient condition for the existence of a linear order satisfying the conclusion of Main Theorem \ref{maintheorem1}.

\begin{theorem}\label{con}
Suppose that there exists $A\subset\R$ such that the following hold:
\begin{enumerate}
\item $A$ is dense in $\R$ and $\aleph_1$-dense and;
\item $A$ is slicewise coverable.
\end{enumerate}
Let $\Q$ be an $\aleph_1$-preserving forcing of size $\aleph_1$. Then $\Q$ forces that $A$ and $A^*$ are far.
\end{theorem}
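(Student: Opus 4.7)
The plan is to argue by contradiction, assuming some condition $p^* \in \Q$ forces $\dot{f}$ to name an uncountable order-reversing partial injection $A \rightharpoonup A$ (equivalent to the nearness of $A$ and $A^*$ in $V[G]$). The strategy is to build a suitable $V$-filtration of $A$, apply slicewise coverability, and use the structure of the elementary chain to force the generic order-reversing $f$ to overlap some order-preserving function uncountably, which is impossible.

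Specifically, I would take a continuous $\in$-chain $\langle M_\alpha : \alpha < \omega_1\rangle$ of countable elementary submodels of some $H_\theta$ with $\Q, \dot{f}, A, p^*$ and a fixed enumeration of $A$ in $M_0$, and set $A_\alpha := M_\alpha \cap A$ (with bookkeeping so $A_0 = \emptyset$). By $\aleph_1$-density and elementarity, each slice $A_{\alpha+1} \setminus A_\alpha$ is a countable dense subset of $A$, so $\vec{A}$ is a filtration. Slicewise coverability in $V$ then yields countably many partial increasing $g_n : A \rightharpoonup A$ in $V$ with $\bigcup_n g_n = D := \bigcup_\alpha (A_{\alpha+1} \setminus A_\alpha)^2$. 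Granted that in $V[G]$ the intersection $f \cap D$ is uncountable (for $f := \dot{f}^G$), pigeonhole produces some $n < \omega$ with $|f \cap g_n| \geq \aleph_1$, which is impossible: any two distinct pairs $(x, y), (x', y') \in f \cap g_n$ with $x < x'$ would satisfy $y = g_n(x) < g_n(x') = y'$ (as $g_n$ increases) and $y = f(x) > f(x') = y'$ (as $f$ decreases) simultaneously, an absurdity; so $|f \cap g_n| \leq 1$, and we are done.

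The crux is therefore showing $|f \cap D| = \aleph_1$ in $V[G]$. In $V[G]$ I would form the absorbed chain $A_\alpha^+ := M_\alpha[G] \cap A \supseteq A_\alpha$; since names for $f$ and $f^{-1}$ lie in $M_\alpha$, each $A_\alpha^+$ is closed under $f$ and $f^{-1}$. An alternating back-and-forth shows $C := \{\alpha : A_\alpha = A_\alpha^+\}$ is a club in $V[G]$, on which the $V$-sets $A_\alpha$ are themselves closed under $f$ and $f^{-1}$. Defining $h_C(x) := \min\{\alpha \in C : x \in A_\alpha\}$ gives $h_C(f(x)) = h_C(x)$ for every $x \in \dom f$ (combining closure under $f$ with closure under $f^{-1}$), so $f \subseteq D_C$, where $D_C$ is the $C$-indexed (coarsened) diagonal of $\vec{A}$. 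To refine ``same $C$-slice'' to ``same $\vec{A}$-slice uncountably often'', I would apply a Fodor-style pressing-down to the $\vec{A}$-level function $h_{\vec A}$: the off-diagonal scenarios $h_{\vec A}(f(x)) < h_{\vec A}(x)$ or $h_{\vec A}(f(x)) > h_{\vec A}(x)$ on an uncountable set each force countable preimages $f^{-1}(A_{\gamma+1})$ (respectively $f(A_{\gamma+1})$) by injectivity plus slice-countability, and a stationarity argument then rules them out—with the symmetric case handled by the same argument applied to $f^{-1}$.

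The main obstacle is precisely this stationarity step. For an arbitrary $\aleph_1$-preserving $\Q$ of size $\aleph_1$, a $V[G]$-club need not contain any $V$-club, so the $C$-slices can strictly coarsen the $\vec{A}$-slices and the relevant uncountable sets of levels are not automatically stationary in $V[G]$. Overcoming this should exploit $|\Q| = \aleph_1$ together with a careful choice of the elementary chain $\langle M_\alpha\rangle$ tailored to $\dot{f}$—for instance, arranging that $C$ has enough successor structure, or equivalently that uncountably many $\vec{A}$-slices coincide with $C$-slices, so that slicewise coverability applied to $\vec{A}$ in $V$ is strong enough to trap the generic order-reversal $f$.
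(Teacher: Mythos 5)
Your reduction to the final pigeonhole is fine: if the generic order-reversing injection $f$ had uncountable intersection with $D=\bigcup_\alpha (A_{\alpha+1}\setminus A_\alpha)^2$ for a ground-model filtration $\vec A$, then some increasing $g_n$ from the coverability witness would meet $f$ in two points, which is absurd. But the crux step $|f\cap D|=\aleph_1$ is exactly where the proof is missing, and the repair you sketch does not close it. The club $C$ of closure points of $f$ lives in $V[G]$; for a general $\aleph_1$-preserving (non-proper) $\Q$ it need not contain, or even be compatible modulo countable sets with, any ground-model club, so the pairs $(x,f(x))$ may all straddle distinct $\vec A$-slices. The Fodor-style argument you propose does not rescue this: the relevant set of levels $h_{\vec A}[T]$ is only uncountable, not stationary, and uncountable subsets of $\omega_1$ need not be stationary, so pressing down is not available. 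More tellingly, your argument never actually uses $|\Q|=\aleph_1$, whereas the statement is false without that hypothesis (Baumgartner's own proper poset of size continuum adds an isomorphism $A\cong A^*$); any correct proof must invoke the size bound in an essential way.

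The paper's proof avoids trying to trap the whole generic $f$ inside $D$. Instead, enumerating $\Q=\{q_\xi:\xi<\omega_1\}$, it chooses for each condition $p$ and each $\alpha<\omega_1$ a single extension $q_{p,\alpha}\le p$ with enumeration index above $\alpha$, deciding one pair $\dot\pi_*(\beta_{p,\alpha})=\gamma_{p,\alpha}$ with $\beta_{p,\alpha},\gamma_{p,\alpha}>\alpha$, and then closes a club $E\subseteq\omega_1$ in $V$ under the maps $(p,\alpha)\mapsto(\beta_{p,\alpha},\gamma_{p,\alpha})$ and under the indices of the $q_{p,\alpha}$. The filtration induced by $E$ then has, \emph{by construction}, each decided pair inside a single slice, so slicewise coverability applies to those pairs without any genericity argument. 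Each set $S_n=\{q_{p,\alpha}:f_n(a_{\beta_{p,\alpha}})=a_{\gamma_{p,\alpha}}\}$ is an antichain (two compatible members would force the order-reversing $\dot\pi$ to agree with the increasing $f_n$ at two points). Extending to maximal antichains $I_n$ and letting $\dot\xi_n$ name the index of the unique element of $\dot G\cap I_n$, the hypothesis that $\Q$ preserves $\aleph_1$ and has size $\aleph_1$ forces $\sup_n\dot\xi_n$ to be bounded below $\omega_1^V$ by some condition $p$; picking $\alpha\in E$ above that bound and above the index of $p$ produces $q_{p,\alpha}\in S_n$ whose index exceeds the bound, a contradiction. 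This antichain-plus-boundedness diagonalization is the missing idea in your proposal; I would recommend reworking the argument around decided pairs and a club tailored to the name $\dot\pi$ rather than around the realized generic function.
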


\begin{proof}
Fix $A$ as in the statement of the theorem and enumerate its elements as $\{a_\alpha : \alpha \in \omega_1\}$. Suppose towards a contradiction that $\Q$ is a forcing notion of size $\aleph_1$ forcing that $A$ and $A^*$ are near. Let $\seq{q_\xi:\xi<\omega_1}$ be an injective enumeration of $\Q$. Let $\dot{\pi}$ be a $\Q$-name so that $\forces_\Q$`` $\dot{\pi}$ is an order isomorphism between uncountable subsets of $A$ and $A^*$". Without loss of generality we can assume that the lower cone of every element of $\Q$ is uncountable.

Let $\dot{\pi}_*$ denote the name for the uncountable, partial function from $\omega_1$ to $\omega_1$ defined in the extension by $\dot{\pi}_* (\alpha) = \beta$ if and only if $\dot{\pi}(a_\alpha) = a_\beta$. For each $p \in \Q$ and $\alpha \in \omega_1$ we can choose a triple $(q_{p, \alpha}, \beta_{p, \alpha}, \gamma_{p, \alpha})$ so that the following are satisfied:
\begin{enumerate}
    \item $q_{p, \alpha} \leq p$ and, if $\xi < \omega_1$ is such that $q_{p, \alpha} = p_\xi$, then $\xi > \alpha$;
    \item $\beta_{p, \alpha}, \gamma_{p, \alpha} \in (\alpha, \omega_1)$ (that is they are countable ordinals above $\alpha$);
    \item $q_{p, \alpha} \forces \dot{\pi}_*(\check{\beta}_{p, \alpha}) = \check{\gamma}_{p, \alpha}$
\end{enumerate}

Note that these exist since $\dot{\pi}$ is assumed to be uncountable (so there are $\beta_{p, \alpha}, \gamma_{p, \alpha} > \alpha$) and every lower cone is uncountable so we can always strengthen to such a $q$ should the original index of a condition forcing the above statement be too small. 

By considering a chain of elementary submodels, we can find a club $E \subseteq \omega_1$ with the following properties.
\begin{enumerate}
    \item If $\delta \in E$ then for every $\eta,\xi < \delta$ we have that $\beta_{p_\xi, \eta}, \gamma_{p_\xi, \eta} < \delta$ and, if $q_{p_\xi,\eta} = p_\zeta$, then $\zeta < \delta$ as well.
    \item For every $\delta\in E$, if $\delta':=\min(E\setminus(\delta+1))$, then $\{a_i:\delta\le i <\delta'\}$ is dense in $A$. \label{filter}
    \item $0\in E$.
\end{enumerate}
Fix now such a club $E$ and enumerate its elements as $E = \{\delta_\alpha : \alpha < \omega_1\}$ in order. Observe that condition \ref{filter} ensures that $A_\alpha:=\{a_i:i<\delta_\alpha\}$ forms a filtration. Choose functions $f_n$ which are increasing and witness slicewise coverability of $A$ with respect to this filtration. 
\begin{claim}
For each $p\in\Q$ and each $\alpha<\omega_1$, there exists $n\in\omega$ such that $f_n(a_{\beta_{p,\alpha}})=a_{\gamma_{p,\alpha}}$.
\end{claim}

\begin{proof}
Fix $p\in\Q$ and $\alpha<\omega_1$. There is a unique $i<\omega_1$ such that $\delta_i\le \alpha<\delta_{i+1}$. By (A) in the definition of $E$, $\beta_{{p,\alpha}},\gamma_{p,\alpha}<\delta_{i+1}$. On the other hand, by (B) in the definition of $(q_{p, \alpha}, \beta_{p, \alpha}, \gamma_{p, \alpha})$, we have that $\beta_{p,\alpha} > \alpha\ge \delta_i$ and similarly for $\gamma_{p,\alpha}$. Therefore, $(a_{\beta_{p,\alpha}},a_{\gamma_{p,\alpha}})\in (A_{i+1}\setminus A_i) \times (A_{i+1}\setminus A_i)$, and so, by the definition of the $f_m$'s, there is some $n\in\omega$ with $f_n(a_{\beta_{p,\alpha}})=a_{\gamma_{p,\alpha}}$.
\end{proof}

Consider the set
\[
S_n:=\left\{q\in\Q: \exists\alpha<\omega_1\exists p\in\Q [q=q_{p,\alpha}\wedge f_n(a_{\beta_{p,\alpha}})=a_{\gamma_{p,\alpha}}]\right\}.
\]


\begin{claim}
    For every $n < \omega$, the set $S_n$ is an antichain.
\end{claim}

\begin{proof}[Proof of Claim]
   Fix $n < \omega$. If $q_{p_0, \alpha_0}, q_{p_1, \alpha_1} \in S_n$ then we have that $f_n(a_{\beta_{p_i, \alpha_i}}) = a_{\gamma_{p_i, \alpha_i}}$ for $i < 2$ and hence $\{(a_{\beta_{p_0, \alpha_0}}, a_{\gamma_{p_0, \alpha_0}}), (a_{\beta_{p_1, \alpha_1}}, a_{\gamma_{p_1, \alpha_1}}) \}$ is an increasing function so $q_{p_0, \alpha_0}$ and $q_{p_1, \alpha_1}$ cannot be in the same generic as $\dot\pi$ was forced to be order reversing. 
\end{proof}

For each $n < \omega$ extend $S_n$ to some maximal antichain $I_n$. Let $\dot{\xi}_n$ denote the unique countable ordinal so that $p_{\dot{\xi}_n} \in \dot{G} \cap \check{I}_n$. Let $\dot\xi$ be a name for $\sup_{n\in\omega}\dot{xi}_n$. Since $\omega_1$ is not collapsed, it follows that $\Vdash \dot{\xi}<\omega_1^V$. Pick $\epsilon < \omega_1$ and $\xi < \w_1$ so that $p = p_\epsilon \forces \dot{\xi} = \check{\xi}$. Let $\alpha \in E$ be larger than $\epsilon$ and $\xi$ and finally consider the triple $(q_{p, \alpha}, \beta_{p, \alpha}, \gamma_{p, \alpha})$. If $\zeta<\omega_1$ is such that $q_{p, \alpha} = p_\zeta$, then $\zeta > \alpha > \xi$ by (B) in the definition of $q_{p,\alpha}$. Moreover, there is an $n < \omega$ so that $f_n(a_{\beta_{p, \alpha}})= a_{\gamma_{p, \alpha}}$ and hence $q_{p, \alpha} \in S_n$. Thus, $q_{p, \alpha}$ forces that $\dot{\xi}_n = \zeta$, contradicting that $p$, which was a weaker condition, forced that $\dot{\xi}_n < \dot{\xi} < \zeta$. 
\end{proof}

\section{Constructing a Model of $\MA$ with a slicewise coverable linear order} \label{section: construction}
We now complete the proof of Main Theorem \ref{maintheorem1} by constructing a model of $\MA + \cc = \kappa$ in which there is a slicewise coverable, $\aleph_1$-dense linear order. By Theorem \ref{con} this suffices.

\begin{theorem} \label{construction}
    Assume $\GCH$ and let $\kappa$ be a regular cardinal greater than $\aleph_1$. There is a ccc forcing notion $\P$ so that in any generic extension by $\P$ there is an $\aleph_1$-dense $A \subseteq\mathbb R$ so that the following hold.
    \begin{enumerate}
\item $2^{\aleph_0} = \kappa$ and $\MA$ holds and;
\item $A$ is slicewise coverable.
\end{enumerate}
\end{theorem}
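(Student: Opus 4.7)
The plan is a two-part finite-support construction over a ground model $V$ of $\GCH$: first adjoin a good $\aleph_1$-dense set $A$ by $\omega_1$ mutually Cohen-generic reals, and then, over $V_1 := V^{\Add(\omega, \omega_1)}$, run a finite-support iteration $\langle \P_\alpha, \dot\Q_\alpha : \alpha < \kappa\rangle$ of appropriate ccc forcings alternating between two families chosen by a $\kappa$-length bookkeeping. The full forcing is $\P := \Add(\omega, \omega_1) * \dot\P_\kappa$.

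\emph{The set $A$.} The Cohen stage produces $A = \{r_\xi : \xi < \omega_1\} \subseteq \R$, which is dense in $\R$, $\aleph_1$-dense, and good in $V_1$ by the cited lemma on mutually Cohen-generic reals, with the enumeration $\langle r_\xi : \xi < \omega_1\rangle$ witnessing goodness.

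\emph{The slicewise coverability forcing.} Given a filtration $\vec{A} = \langle A_\alpha : \alpha < \omega_1\rangle$ of $A$ in some intermediate extension $V_\alpha$, define $\Q_{\vec{A}}$: a condition is a finite partial function $p : \omega \times A \rightharpoonup A$ such that (i) for each $n < \omega$ the section $p(n, \cdot)$ is order-preserving, and (ii) for each $(n, a) \in \dom(p)$ the pair $\{a, p(n, a)\}$ lies in a common slice $A_{\alpha+1} \setminus A_\alpha$; order by reverse inclusion. For any $(a, b) \in (A_{\alpha+1} \setminus A_\alpha)^2$ and any condition $p$, choosing $n$ not appearing as first coordinate in $\dom(p)$ and extending by $(n, a, b)$ gives a condition, so a straightforward density argument shows that the generic $f_n := \bigcup_{p \in \dot G} p(n, \cdot)$ are partial increasing functions whose graphs together cover $\bigcup_{\alpha} (A_{\alpha+1} \setminus A_\alpha)^2$, witnessing slicewise coverability for $\vec{A}$.

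\emph{The iteration and its bookkeeping.} Using a $\kappa$-length bookkeeping, each $\dot\Q_\alpha$ is either an appropriate ccc poset of size $\aleph_1$ (arranged as in \cite{Sh:106} so as to catch every such poset arising in the iteration cofinally often, yielding $\MA + \cc = \kappa$) or some $\Q_{\dot{\vec{A}}}$ for a filtration appearing by stage $\alpha$. Since $\GCH$-based finite-support ccc iteration of length $\kappa$ produces at most $\kappa$ filtrations of $A$ in $V^\P$, the bookkeeping can catch them all. By Lemma \ref{iteration}, each $\P_\alpha$ is appropriate, so $A$ remains good throughout, and standard cardinal arithmetic gives $2^{\aleph_0} = \kappa$ in $V^\P$; together with Theorem \ref{con} we then obtain the claim.

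\emph{Main obstacle.} The critical verification is that each $\Q_{\vec{A}}$ is appropriate for $A$. Given $\langle (p_\alpha, a_\alpha) : \alpha < \omega_1\rangle$ with $p_\alpha \in \Q_{\vec{A}}$ and $a_\alpha \in [\omega_1]^n$, apply a $\Delta$-system refinement so that all $p_\alpha$ share a common root $p^*$ and a uniform combinatorial shape on $p_\alpha \setminus p^*$ (same sequence of first coordinates, same pattern of which $A$-entries coincide or are ordered in which way). Encode into a single tuple $c_\alpha \in [\omega_1]^N$ both the components of $a_\alpha$ and the $\omega_1$-indices of the $A$-elements appearing in $p_\alpha \setminus p^*$, and apply the goodness of $A$ to $\langle c_\alpha : \alpha < \omega_1\rangle$ to obtain $\alpha < \beta$ with $r_{c_\alpha(i)} \le r_{c_\beta(i)}$ coordinate-wise. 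This coordinate-wise inequality simultaneously verifies the goodness condition on $(a_\alpha, a_\beta)$ and ensures that each $p_\alpha(n, \cdot) \cup p_\beta(n, \cdot)$ is order-preserving, so $p_\alpha \cup p_\beta \in \Q_{\vec{A}}$. The slice constraint is automatic, as each pair is inherited from one of the $p_\gamma$'s. With appropriateness of $\Q_{\vec{A}}$ in hand, the remaining tasks (preservation under the finite-support iteration, bookkeeping for $\MA$, and final cardinal arithmetic) reduce to standard arguments.
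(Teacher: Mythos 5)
Your overall architecture is exactly the paper's: add $\aleph_1$ mutually generic Cohen reals to get a good $A$, then run a finite-support iteration of length $\kappa$ of appropriate ccc posets, interleaving the $\MA$ bookkeeping with a slice-covering forcing (your single poset on $\omega\times A$ is just the paper's finite-support product $\prod^{\textsf{fin}}_{n<\omega}\Q_{\vec A}$ in disguise), with Lemma \ref{iteration} preserving goodness throughout. However, there is a genuine gap in what you yourself identify as the critical verification. After your $\Delta$-system and shape refinement, goodness applied to the tuples $c_\alpha$ only yields comparisons between entries occupying the \emph{same} position $i$ in $p_\alpha$ and $p_\beta$; it says nothing about how the $i$-th entry of $p_\alpha$ compares to the $j$-th entry of $p_\beta$ for $i\neq j$. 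Your ``same pattern of which $A$-entries are ordered in which way'' only fixes the internal order type of each condition (the permutation $\sigma$ relating domain and range), and that is not enough: with $\dom(p_\alpha)=\{x_0<x_1\}$, $\dom(p_\beta)=\{y_0<y_1\}$, $x_i\le y_i$, the relative position of $x_1$ and $y_0$ is still free, and the union can fail to be order-preserving. The paper closes exactly this hole by fixing, for each condition $q$ and each pair $i\neq j$, rational separators $d^q_{ij}$ (between the $i$-th and $j$-th domain entries) and $e^q_{ij}$ (between the corresponding range entries), shrinking so these are constant across the family; the common separators then pin down all cross-position comparisons, and goodness handles the $i=j$ case. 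Without this (or an equivalent device, e.g.\ confining the $i$-th entries of all conditions to pairwise disjoint rational intervals), the step ``the coordinate-wise inequality ensures $p_\alpha\cup p_\beta$ is order-preserving'' is simply false.

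Two smaller points. First, for full $\MA$ (not just $\MA_{\aleph_1}$) with $\cc=\kappa>\aleph_2$ your bookkeeping must handle appropriate posets of all sizes ${<}\kappa$, not just size $\aleph_1$. Second, you should say why forcing only with \emph{appropriate} posets yields full $\MA$: the point (made in the paper, following \cite{Sh:106}) is that in the final model every ccc poset is appropriate, since by Lemma \ref{appropriate lemma} a ccc poset failing appropriateness would destroy the goodness of $A$, which the $\Q_{\vec A}$-stages have made incompatible with ccc-ness.
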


Towards proving this theorem we need a forcing notion which will allow us to cover the slices of a given filtration.

\begin{definition}\label{Q_A}
Let $A=\{r_\xi:\xi<\w_1\}$ be $\aleph_1$-dense and good. Given a filtration $\vec A$ of $A$, let $\Q_{\vec A}$ be the set of finite, order-preserving partial functions $q:A\rightharpoonup A$ such that
\[
\forall \xi,\eta<\w_1[q(r_\xi)=r_\eta\to \exists\alpha<\w_1(r_\xi,r_\eta\in A_{\alpha+1}\setminus A_\alpha)].
\]
Order $\Q_{\vec A}$ by (reverse) inclusion.
\end{definition}

\begin{lemma}
    Let $A$ be $\aleph_1$-dense and good, and let $\vec{A} = \langle A_\alpha: \alpha \in \omega_1\rangle$ be a filtration. The following hold.
\begin{enumerate}
    \item $\Q_{\vec{A}}$ is ccc.
    \item $\Q_{\vec{A}}$ generically adds a function $f:A \to A$ which is defined on an uncountable set, is increasing and has the property that for every $\alpha < \omega_1$ we have $a \in A_{\alpha+1}\setminus A_\alpha$ if and only if $f(a) \in A_{\alpha+1}\setminus A_\alpha$ for every $a \in {\rm dom}(p)$.
    \item $\Q_{\vec{A}}$ is appropriate.
\end{enumerate}
\end{lemma}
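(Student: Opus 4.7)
The lemma has three parts; I would address (B) first (a short density argument), then (A) (ccc, the technical crux), then (C) (appropriateness, which reduces to (A) plus preservation of goodness).

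For (B), the generic function: a standard density argument. For each $a \in A$, letting $\gamma$ be the unique ordinal with $a \in A_{\gamma+1}\setminus A_\gamma$, I would show that $D_a := \{q \in \Q_{\vec A} : a \in \dom q\}$ is dense. Given $q \notin D_a$, the admissible images for $a$ form the intersection of the slice $A_{\gamma+1}\setminus A_\gamma$ with the open interval in $\mathbb R$ determined by $q$'s pairs immediately flanking $a$, minus the finite set $\ran q$; density of each slice in $\mathbb R$ makes this intersection nonempty. A generic filter thus produces $f$ with $\dom f = A$; the fact that $f$ is increasing and that each pair $(a,f(a))$ lies in a common slice is inherited directly from the conditions.

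For (A), ccc: given $\{q_\alpha : \alpha < \omega_1\}$, enumerate the pairs of $q_\alpha$ as $(r_{\xi_{\alpha,k}}, r_{\eta_{\alpha,k}})$ for $k < n$, and set $b_\alpha = \{\xi_{\alpha,k}, \eta_{\alpha,k} : k<n\}$. I would refine by fixing the size $n$, applying the $\Delta$-system lemma to $\{b_\alpha\}$ to obtain a root $\Delta$, and by pigeonhole freezing both the ordinal template (the order type of $b_\alpha\setminus\Delta$ with labels distinguishing $\xi$- from $\eta$-indices) and the real template (the order in $\mathbb R$ of the $2n$ corresponding reals). Applying goodness of $A$ to $\seq{b_\alpha\setminus\Delta : \alpha<\omega_1}$ then yields $\alpha<\beta$ with coordinate-wise $r$-dominance. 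The frozen templates combined with this dominance are meant to force $q_\alpha\cup q_\beta$ to be order-preserving; slice-preservation is automatic, since each individual condition respects slices and the non-root index sets are disjoint. For (C), by Lemma \ref{appropriate lemma} it suffices, given (A), to check that $\Q_{\vec A}$ preserves goodness. I would verify this by augmenting the above refinement to incorporate the auxiliary sequence $\seq{a_\alpha}$ of Definition \ref{ap} and applying goodness once to simultaneously obtain both compatibility of $p_\alpha,p_\beta$ and coordinate-wise $r$-dominance of $a_\alpha,a_\beta$.

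The main obstacle is the final step of (A): verifying that after pigeonhole refinement and a single application of goodness, $q_\alpha\cup q_\beta$ really is order-preserving. For $n=1$ this follows by a short case analysis on the ordinal template. For $n\ge 2$, the ``diagonal'' cross-pair comparisons between $(r_{\xi_{\alpha,k}},r_{\eta_{\alpha,k}})$ and $(r_{\xi_{\beta,l}},r_{\eta_{\beta,l}})$ with $k\ne l$ are not directly pinned down by goodness, and ruling out inconsistent cross-templates appears to require an additional Ramsey-style refinement over pairs $\{\alpha,\beta\}$ freezing the relative order of all $4n$ reals in $q_\alpha\cup q_\beta$, together with an argument that any such cross-template compatible with the goodness-dominance from the refined sequence must itself be order-preserving-consistent. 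This coherence between pair-Ramsey refinement and goodness is the delicate technical piece.
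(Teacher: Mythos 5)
Your outline of (B), and of reducing (C) to an augmented run of the argument for (A), matches the paper (though the paper does not route (C) through Lemma \ref{appropriate lemma}: it directly applies goodness to the $2n+m$-tuples $(u_{p_\alpha},v_{p_\alpha},a_\alpha)$, which is exactly what your final sentence describes, so the detour through ``preserves goodness'' is unnecessary and in any case is not what you end up verifying). The real problem is the gap you yourself flag in (A), and the repair you gesture at does not exist. A ``Ramsey-style refinement over pairs $\{\alpha,\beta\}$ freezing the relative order of all $4n$ reals in $q_\alpha\cup q_\beta$'' is a finite colouring of $[\omega_1]^2$, and by Sierpi\'nski's colouring there need not be any uncountable homogeneous set: $\omega_1\not\to(\omega_1)^2_2$ is a theorem of $\ZFC$. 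Without controlling the cross-templates the argument genuinely fails; already for $n=2$, goodness together with the frozen per-condition templates leaves the relative order of $r_{u_p(1)}$ and $r_{u_q(0)}$ (and of the two corresponding range points) undetermined, and the domain-side and range-side answers can disagree, breaking order-preservation of $q_\alpha\cup q_\beta$.

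The paper's fix converts the pair-colouring into a point-colouring by interpolating rationals. For each condition $q$ and each $i\neq j$ one fixes rationals $d^q_{ij}$ strictly between $r_{u_q(i)}$ and $r_{u_q(j)}$, and $e^q_{ij}$ strictly between $r_{v_q(i)}$ and $r_{v_q(j)}$; since each condition carries only finitely many rationals (and one permutation $\sigma_q$ matching domain to range), pigeonhole on singletons yields an uncountable subfamily on which all of these are constant. Then every cross-comparison with $i\neq j$ is forced through the shared rational: $r_{u_p(i)}<d_{ij}<r_{u_q(j)}$ on the domain side, with the matching inequality through $e_{\sigma(i)\sigma(j)}$ on the range side; only the $i=j$ comparisons remain, and those are exactly what one application of goodness supplies. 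Two further points to fold in: the paper first disjointifies domains and ranges (via minimality of the condition size $n$ and a $\Delta$-system argument, after noting that a root element has only countably many admissible images because images must lie in the same slice), whereas your refinement leaves a possibly nonempty root $\Delta$ on which $q_\alpha$ and $q_\beta$ could simply disagree as functions; and, as you say, slice-preservation of the union is automatic.
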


\begin{proof}
To establish (A), we argue by contradiction. We can then let $n$ be the smallest positive integer so that there exists an uncountable antichain all of whose elements have size $n$. Let $W$ be an uncountable antichain of $\Q_{\vec A}$ with $|q|=n$ for all $q\in W$.

\begin{claim}
We may assume, shrinking $W$ if necessary, that for all $p,q\in W$ with $p\neq q$, $\dom(p)\cap \dom(q)=\emptyset$ and $\ran(p)\cap\ran(q)=\emptyset$.
\end{claim}

\begin{proof}
If $n=1$, then the elements of $W$ are all singletons, and the conclusion is easy. Suppose therefore that $n>1$ and let $W'\in [W]^{\aleph_1}$ be such that $\{\dom(q):q\in W'\}$ forms a $\Delta$-system with root $R$. For each $r\in R$, $q(r)$ can take values in a countable set (by the definition of $\Q_{\vec{A}}$), so, by the finiteness of $R$, we may find $W''\in [W']^{\aleph_1}$ so that, if $p,q\in W'$, then $p\cup q$ is a function. Now note that $\{q\restr(\dom(q)\setminus R):q\in W''\}$ is a family of conditions of size $n-|R|$, which moreover forms an antichain. By the minimality of $n$, $|R|=0$, so the first half of the conclusion follows upon replacing $W$ by $W''$.

To get pairwise disjoint ranges, simply apply the previous paragraph to the family $\{q^{-1}:q\in W\}$.
\end{proof}

Set, for every $q\in W$,
\[
u_q:=\{\xi<\omega_1:r_\xi\in\dom(q)\}
\]
and
\[
v_q:=\{\xi<\omega_1:r_\xi\in\ran(q)\}.
\]
We identify $u_q$ with its increasing enumeration, so that, for example, $u_q(i)$ denotes the $i$th element (in the ordinal ordering) of $\dom(q)$ whenever $i<n$.

For each $q\in W$ and $i,j<n$ with $i\neq j$, fix rational numbers $d_{ij}^q$ and $e_{ij}^q$ so that the following hold:

\begin{itemize}
\item $d_{ij}^q$ is between $r_{u_q(i)}$ and $r_{u_q(j)}$;
\item $e_{ij}^q$ is between $r_{v_q(i)}$ and $r_{v_q(j)}$;
\end{itemize}
Also, fix for every $q\in W$ a permutation $\sigma_q$ in\footnote{$S_n$ is the symmetric group with $n$-elements} $S_n$ so that, for every $i<n$, $q(r_{u_q(i)})=r_{v(\sigma_q(i))}$.

Shrinking $W$ if necessary, we may assume that, for each $i\neq j$, the map $q\mapsto (d_{ij}^q,e_{ij}^q)$ is constant, say with value $(d_{ij},e_{ij})$. Similarly, we may assume that the map $q\mapsto \sigma_q$ is constant, say with value $\sigma$.

We now apply the goodness of $A$ to the family of $2n$-tuples $\seq{(u_q,v_q):q\in W}$ to find $p,q\in W$ with $p\neq q$ and
\[
\forall i<n\left[r_{u_p(i)}\le r_{u_q(i)} \wedge r_{v_p(i)}\le r_{v_q(i)}\right].
\]
We will show that $p$ and $q$ are compatible, which will contradict that $W$ forms an antichain. To see this, fix $x\in \dom(p)$ and $y\in \dom(q)$ with $x<y$ we need to see that $p\cup q$ is order-preserving on $\{x,y\}$. Fix $i,j<n$ so that $x=r_{u_p(i)}$ and $y_{u_q(j)}$.  We now distinguish two cases. Suppose first that $i\neq j$. Suppose that $r_{u_p(i)}<r_{u_p(j)}$, the other case being symmetric. We have that $x=r_{u_p(i)}<d_{ij}<r_{u_p(j)}$ by the choice of $d_{ij}$, and therefore $r_{u_q(i)}<d_{ij}<r_{u_q(j)}=y$ by the same reason. In particular, $x<y$. On the other hand, since $p$ is order-preserving and $r_{u_p(i)}<r_{u_p(j)}$, we infer that $p(r_{u_p(i)})<p(r_{u_p(j)})$, hence
\[
p(x)=r_{v_p(\sigma(i))}<e_{\sigma(i)\sigma(j)}<r_{v_p(\sigma(j))}
\]
and so
\[
r_{v_q(\sigma(i))}<e_{\sigma(i)\sigma(j)}<r_{v_q(\sigma(j))}=q(y).
\]
Putting everything together, $p(x)<q(y)$, as desired.

We are now left with the case $i=j$. We have that $p(x)=r_{v_p(\sigma(i))}$ and $q(y)=r_{v_q(\sigma(i))}$. By the choice of $p$ and $q$, $r_{u_p(i)}\le r_{u_q(i)}$ and $r_{v_p(\sigma(i))}\le r_{v_q(\sigma(i))}$, which immediately yields that $x<y$ and $p(x)<q(y)$.


Item (B) is clear from the way the poset is constructed. 

The proof of (C) is a straightforward modification of that of (A), so we point out the only difference and leave the details to the interested reader. Given $\{(p_\alpha,a_\alpha):\alpha<\omega_1\}$ with $|p_\alpha|=n$ and $|a_\alpha|=m$ for every $\alpha<\omega_1$, apply goodness to the sequence of $2n+m$ tuples $(u_{p_\alpha},v_{p_\alpha},a_\alpha)$ rather than just $(u_{p_\alpha},v_{p_\alpha})$. The argument is then as before, \textit{mutatis mutandis}.
\end{proof}

\begin{lemma}
    Let $A$ be good and $\vec{A}$ be a filtration. The finite-support product $\prod_{n < \omega}^\textsf{fin} \Q_{\vec{A}}$ is ccc, appropriate and generically adds a family $\{f_n\}_{n < \omega}$ of partial, strictly increasing functions $A \to A$ which respect the slices of the filtration and $\bigcup_{n < \omega} f_n = \bigcup_{\alpha < \omega_1} (A_{\alpha+1} \setminus A_\alpha) \times (A_{\alpha+1} \setminus A_\alpha)$ 
\end{lemma}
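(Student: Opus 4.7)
The proof will verify the three claims (ccc, appropriateness, and the existence of the covering family) in order, leveraging the single-factor analysis already established for $\Q_{\vec A}$.

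First, I will define the generic family. Let $G$ be generic and set, for each $n<\w$, $f_n:=\bigcup\{p(n):p\in G\}$. Each $f_n$ is a partial increasing function $A\to A$ respecting slices, since any finite piece $p(n)$ is a condition in $\Q_{\vec A}$ and coherence of the generic ensures the union remains a function with these properties. The covering claim $\bigcup_{n<\w} f_n = \bigcup_{\alpha<\w_1}(A_{\alpha+1}\setminus A_\alpha)\times(A_{\alpha+1}\setminus A_\alpha)$ reduces to a density argument: given any pair $(a,b)\in(A_{\alpha+1}\setminus A_\alpha)^2$ and any condition $p$, choose $n\notin\supp(p)$ and define $q$ by $q(k)=p(k)$ for $k\neq n$ and $q(n)=\{(a,b)\}$. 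Since $a,b$ lie in the same slice, $\{(a,b)\}\in\Q_{\vec A}$, so $q$ is a condition extending $p$ that puts $(a,b)$ into $f_n$. The $\subseteq$ direction is automatic from the definition of $\Q_{\vec A}$.

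For ccc and appropriateness, I will adapt the $\Delta$-system style argument used for a single factor. Given $\seq{(p_\alpha,a_\alpha):\alpha<\w_1}$ with $a_\alpha\in[\w_1]^m$, first thin so that $\{\supp(p_\alpha):\alpha<\w_1\}$ forms a $\Delta$-system; then further thin so that all supports are pinned to a common finite set $F$ (standard finite-support iteration trick, using that elements outside the root can be handled by compatibility on a fixed finite set). I may also assume the sizes $|p_\alpha(k)|=n_k$ are fixed for each $k\in F$. Then, for each $k\in F$ separately, I record the combinatorial ``shape data'' of $p_\alpha(k)$ as in the single-factor proof: the permutation $\sigma_k$ relating $\dom$ to $\ran$, and the rational separators $d^k_{ij},e^k_{ij}$ between the relevant reals. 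After another thinning, all these shape parameters are constant across $\alpha$.

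Now comes the crucial step: rather than applying goodness $|F|+1$ times (which would not ensure simultaneous compatibility), I concatenate everything into a single long tuple. For each $\alpha$, define $b_\alpha\in[\w_1]^N$, with $N=m+\sum_{k\in F}2n_k$, by listing the indices of $a_\alpha$ together with, for each $k\in F$ in some fixed order, the indices enumerating $\dom(p_\alpha(k))$ and $\ran(p_\alpha(k))$. Applying goodness to $\seq{b_\alpha:\alpha<\w_1}$ yields $\alpha<\beta$ such that every entry of $b_\alpha$ is $\le$ the corresponding entry of $b_\beta$ in the real order. The single-factor compatibility argument, carried out coordinatewise in $F$, then shows $p_\alpha(k)\parallel p_\beta(k)$ for each $k\in F$ (relying on the pinned shape data), and outside $F$ compatibility is trivial. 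This yields $p_\alpha\parallel p_\beta$ together with the order condition on $a_\alpha,a_\beta$, establishing appropriateness; ignoring $a_\alpha$ gives ccc.

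The main obstacle is the bookkeeping to ensure a \emph{single} invocation of goodness handles all coordinates in $F$ simultaneously. Iterating goodness over the coordinates would only give pairwise compatibility, not the required uniform $\alpha<\beta$. The concatenation trick, combined with the fact that the length $N$ of the tuple is finite (since $F$ is finite and each $n_k$ is pinned), circumvents this and makes the argument go through as a direct product-version of the single-factor proof.
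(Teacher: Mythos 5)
Your proposal is correct in outline but takes a genuinely different route from the paper's. The paper gets ccc and appropriateness essentially for free from machinery already in place: $\Q_{\vec A}$ is appropriate, hence preserves the goodness of $A$ (Lemma \ref{appropriate lemma}); the definition of $\Q_{\vec A}$ is absolute, so each factor is again appropriate in the intermediate extensions; and the finite-support product is a finite-support iteration of appropriate posets, hence appropriate by Lemma \ref{iteration}. Only the covering statement is argued by hand there, via the same density argument you give. You instead verify appropriateness of the product directly by a product version of the single-factor $\Delta$-system argument. Your key observation --- that one must concatenate the domain/range data of all coordinates in the common support $F$ together with $a_\alpha$ into a single tuple and invoke goodness \emph{once}, since coordinatewise applications would not yield one pair $\alpha<\beta$ working everywhere --- is exactly the right point, and pinning the supports to a fixed finite $F$ is legitimate simply because the index set is $\omega$ (pigeonhole on $[\omega]^{<\omega}$; no $\Delta$-system or iteration trick is needed). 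One thinning from the single-factor proof is missing from your list, however: for each $k\in F$ you must also arrange that the domains (and ranges) of the $p_\alpha(k)$ are pairwise disjoint, or at least that any two conditions agree on their common part; this is what turns $r_{u_p(i)}\le r_{u_q(i)}$ into a strict inequality in the $i=j$ case of the compatibility check, and without it two conditions could send a shared domain point to different values. This is repaired exactly as in the Claim inside the single-factor lemma. On balance, your argument is more self-contained, while the paper's is shorter because it reuses Lemma \ref{iteration}, which it needs anyway for the final bookkeeping iteration.
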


\begin{proof}
    Since $\Q_{\vec{A}}$ is ccc when $A$ is good it follows that $\Q_{\vec{A}}$ forces itself to remain ccc by the fact that it is appropriate. It follows therefore that the finite-support product is itself ccc and in fact appropriate since finite support iterations of ccc, appropriate forcing notions are themselves appropriate. By a density argument, the union of the generic functions cover $\bigcup_{\alpha<\omega_1}(A_{\alpha+1}\setminus A_\alpha)\times (A_{\alpha+1}\setminus A_\alpha)$, and we are done.
\end{proof}

Let us now finish the proof of Theorem \ref{construction}. This is similar to the proof of \cite[Theorem 2]{Sh:106}.

\begin{proof}[Proof of Theorem \ref{construction}]
    Assume $\GCH$ and first add $\aleph_1$-many Cohen reals. Call this set $A$. Again by \cite{Sh:106}, this is a good set. By a finite support iteration of length $\kappa$ we can, using some bookkeeping, force with every ccc, appropriate partial order of size ${<}\kappa$. By standard arguments we get that in the final model $2^{\aleph_0} = \kappa$ and Martin's axiom holds for appropriate posets. By Lemma \ref{iteration} $A$ is still good. Also, as previously remarked (and observed in \cite{Sh:106}), in fact there are no ccc inappropriate posets so full $\MA$ holds. Moreover, we can now apply the forcing notion $\prod^\textsf{fin}_{n < \omega} \Q_{\vec{A}}$ to any filtration of $\vec{A}$ to obtain a sequence of functions witnessing slicewise-coverability for that filtration. This completes the proof. 
\end{proof}

\section{Discussion and Open Questions
}

We conclude the paper with some final observations and questions for further research. We first note that proof of Theorem \ref{construction} actually shows the following.
\begin{theorem} \label{good linear orders are SWC under MA}
Let $A \subseteq \mathbb R$ be good and $\aleph_1$-dense. If $\MA_{\aleph_1}$ holds then $A$ is slicewise coverable and hence cannot be made isomorphic to its reverse by any forcing notion of size $\aleph_1$ which preserves $\aleph_1$.
\end{theorem}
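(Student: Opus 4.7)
The plan is to apply $\MA_{\aleph_1}$ directly to the forcing constructions already developed in Section \ref{section: construction}, rather than iterating them as in Theorem \ref{construction}. Fix a good $\aleph_1$-dense $A \subseteq \mathbb R$ and an arbitrary filtration $\vec{A} = \langle A_\alpha : \alpha < \omega_1\rangle$. Consider the finite support product $\P := \prod^{\mathrm{fin}}_{n<\omega} \Q_{\vec{A}}$ from the previous section; by the lemmas already established, $\P$ is appropriate for $A$, and in particular ccc, with $|\P| = \aleph_1$.

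For each pair $(a,b)$ lying in a common slice $A_{\alpha+1} \setminus A_\alpha$, I would consider the set
$$D_{a,b} := \{p \in \P : \exists n < \omega \; (a,b) \in p(n)\}.$$
To see $D_{a,b}$ is dense, given $p \in \P$ pick any $n$ outside the finite support of $p$; since $a$ and $b$ lie in the same slice, $\{(a,b)\}$ is a valid condition in $\Q_{\vec{A}}$, and adjoining it at coordinate $n$ produces an extension of $p$ in $D_{a,b}$. Each slice is countable and there are $\aleph_1$ slices, so the family $\{D_{a,b}\}$ has cardinality $\aleph_1$. By $\MA_{\aleph_1}$ we obtain a filter $G \subseteq \P$ meeting every $D_{a,b}$.

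Setting $f_n := \bigcup\{p(n) : p \in G\}$ for each $n < \omega$, each $f_n$ is a partial increasing function from $A$ to $A$ respecting the slices (because every $p(n)$ satisfies these properties, and compatibility in $G$ ensures the union remains a condition-like object in the direct limit sense). By construction, $\bigcup_{n < \omega} f_n$ covers every pair $(a,b) \in (A_{\alpha+1} \setminus A_\alpha)^2$, witnessing that $A$ is slicewise coverable with respect to $\vec{A}$. Since $\vec{A}$ was arbitrary, $A$ is slicewise coverable, and Theorem \ref{con} then yields the indestructibility conclusion.

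There is essentially no substantive obstacle: the nontrivial combinatorial content, namely the ccc-ness (indeed, appropriateness) of $\Q_{\vec{A}}$ for good $A$ and its preservation under finite support products, is already carried out in Section \ref{section: construction}. The only point worth double-checking is that $\MA_{\aleph_1}$ really suffices rather than a stronger fragment, which holds because $\P$ is ccc and the family of dense sets has size exactly $\aleph_1$.
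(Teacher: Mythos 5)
Your proof is correct and is essentially the argument the paper intends: the paper obtains this theorem by noting that the proof of Theorem \ref{construction} applies directly, i.e.\ one forces with $\prod^{\mathrm{fin}}_{n<\omega}\Q_{\vec A}$ (ccc and appropriate since $A$ is good) and meets the $\aleph_1$-many dense sets $D_{a,b}$ via $\MA_{\aleph_1}$, then invokes Theorem \ref{con}. Your write-up simply makes the $\MA_{\aleph_1}$ application explicit rather than routing it through the bookkeeping iteration.
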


In fact, it is easy to see that the above results have nothing to do with $\aleph_1$. Therefore we get the following, where the notions of ``good" and ``slicewise coverable" are generalized above $\aleph_1$ in the obvious way. Note that if $A$ is a $\kappa$-dense set of mutually generic Cohen reals it will good.

\begin{theorem}
Let $\kappa$ be a cardinal and let $A \subseteq \mathbb R$ be good and $\kappa$-dense. If $\MA_{\kappa}$ holds then $A$ is slicewise coverable.
\end{theorem}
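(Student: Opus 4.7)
The plan is to imitate verbatim the argument of Theorem \ref{good linear orders are SWC under MA}, replacing $\aleph_1$ by $\kappa$ throughout and invoking $\MA_\kappa$ in place of $\MA_{\aleph_1}$. First I would fix a filtration $\vec A = \langle A_\alpha : \alpha < \kappa\rangle$ of $A$ and form the natural analog of the poset $\Q_{\vec A}$ from Definition \ref{Q_A}: finite, order-preserving partial functions $q : A \rightharpoonup A$ respecting slices of $\vec A$, ordered by reverse inclusion.

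The key technical point is that $\Q_{\vec A}$ remains ccc (in fact $A$-appropriate), and this is the only step where goodness of $A$ is used. The argument is essentially a literal copy of the ccc proof following Definition \ref{Q_A}: assuming an uncountable antichain exists, minimize the size $n$ of conditions in such an antichain, apply the $\Delta$-system lemma so that one may assume pairwise disjoint domains and ranges, use finitary pigeonhole on the rational separators and on the associated permutations in $S_n$ to isolate a uniform combinatorial type, and finally apply the $\kappa$-version of goodness to the corresponding sequence of $2n$-tuples of indices from $\kappa$ to produce two order-compatible conditions. Nothing in this argument depends on $\kappa = \aleph_1$: the $\Delta$-system lemma and rational-separator pigeonhole work for any uncountable family, and the $\kappa$-goodness hypothesis of $A$ is formulated precisely so as to supply the required compatible pair.

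Next, I would take the finite support countable product $\P = \prod_{n < \omega}^{\textsf{fin}} \Q_{\vec A}$, which is itself ccc. For each pair $(a,b) \in (A_{\alpha+1} \setminus A_\alpha)^2$ with $\alpha < \kappa$, the set
\[ D_{a,b} = \{ p \in \P : (a,b) \in p(n) \text{ for some } n < \omega \} \]
is dense, as one can always extend a given $p$ by placing $(a,b)$ in a fresh coordinate $n$ outside the (finite) support of $p$. There are exactly $\kappa$ many such pairs, since $|A| = \kappa$, so $\MA_\kappa$ produces a filter $F \subseteq \P$ meeting every $D_{a,b}$. Setting $f_n = \bigcup \{ p(n) : p \in F \}$ yields countably many strictly increasing, slice-preserving partial functions $A \rightharpoonup A$ whose union covers $\bigcup_{\alpha < \kappa}(A_{\alpha+1} \setminus A_\alpha)^2$, witnessing slicewise coverability for $\vec A$. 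Since $\vec A$ was arbitrary, $A$ is slicewise coverable.

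The only real work is checking that the $\Delta$-system proof of ccc for $\Q_{\vec A}$ survives the transition from $\aleph_1$ to $\kappa$, but that proof invokes only combinatorics of uncountable antichains, $\Delta$-systems of finite sets, finite pigeonholing, and goodness, all of which are either absolute or explicitly generalized. Hence no modification is needed and the theorem follows.
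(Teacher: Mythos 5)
Your proposal is correct and is essentially the paper's own argument: the paper proves this by observing that the proof of Theorem \ref{construction} (the poset $\Q_{\vec A}$, its ccc-ness via goodness and a $\Delta$-system/pigeonhole argument, the finite support product, and meeting the $\kappa$ many dense sets $D_{a,b}$ via $\MA_\kappa$) goes through verbatim with $\aleph_1$ replaced by $\kappa$. The only point worth making explicit is that the ``obvious'' generalization of goodness must be taken for \emph{uncountable} families of tuples from $[\kappa]^{n}$, since the antichain in the ccc argument has size $\aleph_1$ regardless of $\kappa$.
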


As mentioned in the introduction, a corollary of Theorem \ref{good linear orders are SWC under MA} is an alternative proof of a related (though different) fact shown in \cite{MooreTod17}. This theorem can also be understood in the context of {\em rigidity of} $\aleph_1$-sized structures under $\MA$. Under $\MA$, often structures of size $\aleph_1$ retain some strong ``incompactness" property by forcing notions preserving $\aleph_1$ or at least ccc forcing. For example this is true for Aronszajn trees, all of which are special (and hence indestructibly Aronszajn) under $\MA$ but not in general. If $A$ is $\aleph_1$-dense, then for every countable and dense (in itself) subset $B\subset A$, we have that $B\cong B^*$ by Cantor's Theorem, but $A$ need not be isomorphic (or even near) to its reverse.

We note that the implication ``good $\to$ slicewise coverable" can fail in the absence of $\MA$:

\begin{lemma}
It is consistent for any cardinal $\kappa$ that $\kappa < 2^{\aleph_0}$ and there is a good, $\kappa$-dense $A \subseteq \mathbb R$ which is not slicewise coverable.
\end{lemma}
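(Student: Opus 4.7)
The plan is to exhibit a model in which $2^{\aleph_0} > \kappa$ and a good, $\kappa$-dense $A \subseteq \mathbb{R}$ admits a filtration whose slice-squares cannot be simultaneously covered by countably many increasing partial functions. Since Theorem~\ref{good linear orders are SWC under MA} shows that $\MA_{\aleph_1}$ forces goodness to imply slicewise coverability, the natural place to look is a model where $\MA$ fails for the relevant class of posets---e.g.\ a Cohen extension.

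Starting from $V \models \GCH$, I would force with $\Add(\omega, \kappa^+)$, obtaining $V_1$ in which $2^{\aleph_0} = \kappa^+ > \kappa$. Let $A = \{c_\alpha : \alpha < \kappa\}$ be the first $\kappa$ Cohen reals; the straightforward generalization of the goodness lemma cited in the excerpt shows that $A$ is good and $\kappa$-dense in $V_1$. I would then consider the natural filtration $A_\alpha = \{c_\beta : \beta < \omega \cdot \alpha\}$, whose slices are the countable sets $B_\alpha = \{c_{\omega\alpha + n} : n < \omega\}$, each dense in $A$ by mutual Cohen genericity.

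Suppose, for contradiction, that $\{f_n : n < \omega\} \subseteq V_1$ witnesses slicewise coverability. For each fixed pair $(m,k) \in \omega \times \omega$, covering every pair $(c_{\omega\alpha+m}, c_{\omega\alpha+k})$ as $\alpha$ ranges over $\kappa$ yields an $\omega$-cover $\kappa = \bigcup_{n<\omega} S_n^{m,k}$, where $\alpha \in S_n^{m,k}$ exactly when $f_n(c_{\omega\alpha+m}) = c_{\omega\alpha+k}$. Because each $f_n$ is increasing, the parallel Cohen sequences $\langle c_{\omega\alpha+m} : \alpha \in S_n^{m,k}\rangle$ and $\langle c_{\omega\alpha+k} : \alpha \in S_n^{m,k}\rangle$ must be order-isomorphic on $S_n^{m,k}$. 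I would then derive a contradiction via a density argument in the Cohen product: in $V_1$ the $\kappa$-many independent pairs of mutually generic Cohens cannot be split into countably many pieces on each of which the two coordinates are consistently order-correlated.

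The main obstacle is making this last step rigorous. There are two plausible routes: (i)~argue directly via a $\Delta$-system/fusion argument in the Cohen product that for any sequence of $\kappa$-many names for pairs of mutually Cohen-generic reals, no $V_1$-covering of $\kappa$ by countably many pieces can render the pairs consistently order-correlated on each piece; or (ii)~if the natural filtration turns out, surprisingly, to be slicewise coverable, replace it by a generically added one, using an auxiliary ccc poset $\mathbb{Q}$ that is appropriate for $A$ and hence preserves goodness via Lemma~\ref{appropriate lemma} and Lemma~\ref{iteration}, and then exploit the genericity of $\mathbb{Q}$ to dodge every candidate family $\{f_n\}$ already present in $V_1$.
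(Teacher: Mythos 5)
Your setup (a Cohen extension in which $A$ is the first $\kappa$ of $\kappa^+$ mutually generic Cohen reals, with goodness and $\kappa$-density inherited from the product) matches the paper's choice of witness, but your argument stops exactly where the work begins: you reduce non-coverability to the assertion that ``$\kappa$-many independent pairs of mutually generic Cohens cannot be split into countably many order-correlated pieces,'' acknowledge that making this rigorous is the main obstacle, and offer two unexecuted routes. As written this is a genuine gap, not a proof.

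The missing idea is a Baire category argument, and it is essentially the whole content of the paper's proof. The graph of any increasing partial function $f:\mathbb R\rightharpoonup\mathbb R$ is nowhere dense in $\mathbb R^2$ (its closure lies on a monotone curve), so if $\{f_n\}_{n<\omega}$ covered $\bigcup_\alpha (A_{\alpha+1}\setminus A_\alpha)^2$, that union would be meager in the plane. On the other hand, for a set of mutually generic Cohen reals this union is non-meager in $\mathbb R^2$ for \emph{any} filtration: a meager set $M$ is coded in $V[G\restriction x]$ for some countable set of coordinates $x$; since the slices partition uncountably many indices into countable pieces, some slice uses no coordinate of $x$, and then any off-diagonal pair from that slice is Cohen generic over $V[G\restriction x]$ and so avoids $M$. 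This contradiction finishes the argument (with no need to privilege the ``natural'' filtration) and in fact completes your route~(i): each set $\{(c_{\omega\alpha+m},c_{\omega\alpha+k}):\alpha\in S_n^{m,k}\}$ you isolate is contained in the graph of an increasing function, hence nowhere dense, while the union over $n$ is non-meager by the observation above. The paper packages this as ``any slicewise coverable set must be meager, but the set of Cohen reals is non-meager'' (citing Blass for the latter); no $\Delta$-system or fusion argument is needed, and your fallback route~(ii) is both unnecessary and would require a fresh preservation argument for a poset adding a generic filtration.
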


We remark that if $\kappa=\aleph_1$ then adding $\aleph_1$-many Cohen reals over a model of $\CH$ will witness the lemma as $\CH$ will hold again in $V[A]$ and hence by the main result of \cite{Baum73}, there will be a ccc forcing notion of size $\aleph_1$ which adds an isomorphism between $A$ and $A^*$.

\begin{proof}
Fix a cardinal $\kappa$ and let $A$ be a $\kappa$-dense set of mutually generic Cohen reals. Work in $V[A]$, where we know that $A$ is good. We note the following two facts. First, as is well known, see e.g. \cite[p. 473]{Bls10}, in $V[A]$ the set of Cohen reals is not meager. However observe that any slicewise coverable set $B \subseteq \mathbb R$ must in fact be meager. To see this, fix any filtration $\vec{B} = \{B_\alpha: \alpha \in \kappa\}$ and any family of countably many increasing functions $f_n:B \to B$ as in the definition of slicewise coverability. Note that every $x \in B$ there will be an $n < \omega$ so that $f_n(x) = x$. Now, note that for any given $n < \omega$ we have that the graph of $f_n$ is nowhere dense in $\mathbb R^2$. The result now follows.
\end{proof}

In fact there is another way as well to see this which gives more information.
\begin{lemma}
$\MA (\sigma$-linked$)$ is consistent with a good linear order which is not slicewise coverable.
\end{lemma}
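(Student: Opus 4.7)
The plan is to adapt the construction of Theorem~\ref{construction}, restricting the iteration to $\sigma$-linked posets throughout. Start with $V \models \GCH$ and perform a finite-support iteration of length some regular $\kappa > \aleph_1$, where each stage forces with a $\sigma$-linked poset of size $\leq \aleph_1$, using bookkeeping to ensure $\MA(\sigma\text{-linked})$ in the final model. At the initial stage, include the addition of $\aleph_1$-many mutually generic Cohen reals, forming an $\aleph_1$-dense good set $A$.

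The first step is to verify that $A$ remains good throughout the iteration. The key observation is that every $\sigma$-linked poset is appropriate (in the sense of Definition~\ref{ap}) for a good $A$: given $\langle(p_\alpha,a_\alpha):\alpha<\omega_1\rangle$ with $a_\alpha \in [\omega_1]^n$, partition $\{p_\alpha\}$ into countably many linked pieces; an uncountable subsequence lies in a single piece, where the conditions are automatically pairwise compatible; then apply the goodness of $A$ to this subsequence to extract $\alpha < \beta$ with $r_{a_\alpha(i)}\leq r_{a_\beta(i)}$ for all $i<n$. By Lemma~\ref{iteration}, the finite-support iteration of appropriate posets is appropriate, so $A$ is good in the final extension.

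The main step is then to show that $A$ is not slicewise coverable in the final model. Following the strategy of the previous lemma, it suffices to establish that $A$ remains non-meager. Here lies the main obstacle: certain $\sigma$-linked posets — Cohen forcing being the prototypical example — make ground-model reals meager in their extension, so a naive bookkeeping over \emph{all} $\sigma$-linked posets will destroy the non-meagerness of $A$. To circumvent this, we further restrict the bookkeeping to the subclass $\mathcal{C}$ of $\sigma$-linked posets which preserve the non-meagerness of $A$; by construction, $A$ then remains non-meager in the final model and is therefore not slicewise coverable.

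The hard part of the argument — and the step where I expect the main technical difficulty — is showing that $\MA(\mathcal{C})$ in the resulting model still implies full $\MA(\sigma\text{-linked})$. The plan is to argue that every $\sigma$-linked poset $\Q$ in the final model admits an $A$-preserving modification in $\mathcal{C}$ which produces the desired generic filter for any given $\aleph_1$-sized family of dense sets. This ``absorption'' should follow from a preservation-theorem style analysis showing that the non-meagerness-preservation property is closed under the iteration operations used, combined with a direct combinatorial reduction of $\sigma$-linkedness to $\sigma$-linkedness plus non-meagerness preservation for the purpose of meeting $\aleph_1$-many dense sets.
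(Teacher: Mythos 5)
Your first step (goodness is preserved because every $\sigma$-linked poset is appropriate) is correct and matches what the paper needs implicitly. The main step of your plan, however, cannot work. You propose to keep $A$ non-meager, you rightly observe that Cohen forcing (which is $\sigma$-linked) is an obstacle, and your fix is to restrict the bookkeeping to a subclass $\mathcal{C}$ of non-meagerness-preserving posets and then ``absorb'' the rest to recover full $\MA(\sigma\text{-linked})$. That absorption step is not merely technically hard; it is provably impossible once $2^{\aleph_0}>\aleph_1$. Indeed $\MA_{\aleph_1}(\sigma\text{-linked})$ implies $\mathfrak{b}>\aleph_1$ (Hechler forcing is $\sigma$-centered), hence $\non(\mathcal{M})\ge\mathfrak{b}>\aleph_1$, so \emph{every} set of reals of size $\aleph_1$ --- in particular the $\aleph_1$-dense set $A$ --- is meager in any such model. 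Thus any witness to the lemma with $\neg\CH$ is necessarily a meager good set that is nonetheless not slicewise coverable, and non-meagerness is the wrong invariant to preserve. (If instead you settle for $\mathfrak{c}=\aleph_1$ in the final model, the statement is already witnessed by the ground model $V[A]$ and the iteration accomplishes nothing.)

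The repair is to preserve the relevant property directly rather than the sufficient condition used in the preceding lemma. The paper fixes a filtration $\vec A$ admitting no countable covering family of increasing functions and shows that a single $\sigma$-linked poset $\P=\bigcup_{m<\omega}\P_m$ cannot add one: given names $\dot f_n$ for a covering family, set $\hat f_{n,m}(x)=y$ iff some $p\in\P_m$ forces $\dot f_n(\check x)=\check y$. Linkedness makes each $\hat f_{n,m}$ a well-defined increasing partial function (two conditions in $\P_m$ cannot force incompatible values or an order reversal), every pair in a slice square is decided by some condition and some $n$, and each $\hat f_{n,m}$ lands inside the slice squares; so the $\hat f_{n,m}$ already cover $\bigcup_{\alpha<\omega_1}(A_{\alpha+1}\setminus A_\alpha)^2$ in the ground model, a contradiction. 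Since $\MA(\sigma\text{-linked})$ can be forced by one $\sigma$-linked poset over a model of $\CH$ containing a good, non-slicewise-coverable $A$ (e.g.\ $V[A]$ for $A$ a set of $\aleph_1$ mutually generic Cohen reals), this closes the argument without restricting the class of posets at all.
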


\begin{proof}
As one can force $\MA(\sigma$-linked$)$ by a $\sigma$-linked forcing over a model of $\CH$, it suffices to show that $\sigma$-linked forcing cannot add a countable family of increasing functions covering a filtration that was not already covered by such a family of functions. Therefore let $A \subseteq\mathbb R$ be a set which is not slicewise coverable and let $\vec{A}$ be such a filtration with no family of functions. Suppose towards a contradiction that $\P$ is $\sigma$-linked and let $\{\dot f_n: n < \omega\}$ be forced to be a family of increasing functions covering $\vec{A}$ as in the definition of slicewise coverability. Let $\P = \bigcup_{n < \omega} \P_n$ be the partition into countably many linked pieces. If $n, m \in \omega$ then let $\hat{f}_{n, m}:A \to A$ be the partial, increasing function defined by $\hat{f}_{n, m} (x) = y$ if and only if some $p \in \P_m$ forces that $\dot{f}_n(\check{x}) = \check{y}$. It is easy to verify that by linked-ness each $\hat{f}_{n, m}$ is a monotone increasing function and collectively they will cover the square of the filtration. This is a contradiction however, which completes the proof.
\end{proof}

Note that the above implies that the forcing notions of the form $\Q_{\vec{A}}$ are not in general $\sigma$-linked. It is not clear what the relationship is between good, essentially increasing and slicewise coverable in $\ZFC$. 

\begin{question}
What is the $\ZFC$-provable behavior of the classes of good, essentially increasing and slicewise coverable $\aleph_1$-dense suborders of $\mathbb R$ in $\ZFC$?
\end{question}
It would be particularly interesting to know whether examples like this are at the heart of the failure of $\BA$ under $\MA$. One way to phrase this is the following.
\begin{question}
Does $\MA$ imply that every $\aleph_1$-dense linear order is either reversible or good?
\end{question}

As we mentioned in the introduction, the original motivation for this work was to determine whether $\PFA$ for $\aleph_1$-sized posets (or even $\MM$ for $\aleph_1$-sized posets) suffices to prove $\BA$. While the results presented above hint at a negative answer, we cannot yet rule this out completely.

\begin{question}
Do either $\PFA$ or $\MM$ for $\aleph_1$-sized posets prove $\BA$?
\end{question}
By what we have shown it would be enough to answer the following in the affirmative, though this is not clear.
\begin{question}
If $A \subseteq\mathbb R$ is a good, $\aleph_1$-dense linear order, is the property that $A$ is good preserved by countable support iterations of proper forcing notions?
\end{question}


Observe finally that by the proof $\PFA$ implies $\BA$ in \cite{BaumPFA}, there is always a proper poset $\P$ of size continuum adding an isomorphism from between two given $\aleph_1$-dense $A$ and $B$. By the results above there may not always be one of size $\aleph_1$. This suggests a cardinal characteristic which might give some interesting information.

\begin{question}
Suppose $A, B \subseteq \mathbb R$ are $\aleph_1$-dense. What can be said about the least size of a $\P$ which is proper and adds an isomorphism between them? Can it be strictly between $\aleph_1$ and $\cc$?
\end{question}

\bibliographystyle{plain}
\bibliography{shlhetal, extrarefs}

@article {Baum73,
    AUTHOR = {Baumgartner, James E.},
     TITLE = {All {$\aleph \sb{1}$}-dense sets of reals can be isomorphic},
   JOURNAL = {Fund. Math.},
  FJOURNAL = {Polska Akademia Nauk. Fundamenta Mathematicae},
    VOLUME = {79},
      YEAR = {1973},
    NUMBER = {2},
     PAGES = {101--106},
      ISSN = {0016-2736,1730-6329},
   MRCLASS = {02K05 (06A05)},
  MRNUMBER = {317934},
MRREVIEWER = {John\ Hickman},
       DOI = {10.4064/fm-79-2-101-106},
       URL = {https://doi.org/10.4064/fm-79-2-101-106},
}

@incollection {BaumPFA,
    AUTHOR = {Baumgartner, James E.},
     TITLE = {Applications of the proper forcing axiom},
 BOOKTITLE = {Handbook of set-theoretic topology},
     PAGES = {913--959},
 PUBLISHER = {North-Holland, Amsterdam},
      YEAR = {1984},
      ISBN = {0-444-86580-2},
   MRCLASS = {03E40 (54A35)},
  MRNUMBER = {776640},
MRREVIEWER = {Thomas\ J.\ Jech},
}

@book{jechSetTheory2003,
  title = {Set theory},
  author = {Jech, Thomas},
  year = 2003,
  series = {Springer monographs in mathematics},
  pages = {xiv+769},
  publisher = {Springer-Verlag, Berlin},
  date-added = {2018-05-24 03:23:49 +0000},
  date-modified = {2020-12-11 17:48:12 -0300},
  isbn = {3-540-44085-2},
  mrclass = {03Exx (03-01 03-02)},
  mrnumber = {1940513},
  mrreviewer = {Eva Coplakova}
}

@book{kunenSetTheory2011,
  title = {Set theory},
  author = {Kunen, Kenneth},
  year = 2011,
  series = {Studies in {{Logic}} ({{London}})},
  volume = {34},
  publisher = {College Publications, London},
  isbn = {978-1-84890-050-9},
  mrnumber = {2905394},
  file = {/Users/pedromarun/Zotero/storage/PISSJWYQ/article.html}
}

@article {weakBA,
    AUTHOR = {Switzer, Corey Bacal},
     TITLE = {Weak {B}aumgartner axioms and universal spaces},
   JOURNAL = {Topology Appl.},
  FJOURNAL = {Topology and its Applications},
    VOLUME = {373},
      YEAR = {2025},
     PAGES = {Paper No. 109530, 18},
      ISSN = {0166-8641},
   MRCLASS = {03E17 (03E05 03E15 03E35)},
  MRNUMBER = {4939512},
       DOI = {10.1016/j.topol.2025.109530},
       URL = {https://doi.org/10.1016/j.topol.2025.109530},
}

@article {MooreTod17,
    AUTHOR = {Moore, Justin Tatch and Todorcevic, Stevo},
     TITLE = {Baumgartner's isomorphism problem for {$\aleph_2$}-dense
              suborders of {$\Bbb {R}$}},
   JOURNAL = {Arch. Math. Logic},
  FJOURNAL = {Archive for Mathematical Logic},
    VOLUME = {56},
      YEAR = {2017},
    NUMBER = {7-8},
     PAGES = {1105--1114},
      ISSN = {0933-5846,1432-0665},
   MRCLASS = {03E05 (03E35 03E50)},
  MRNUMBER = {3696077},
MRREVIEWER = {Andrzej\ Ros\l anowski},
       DOI = {10.1007/s00153-017-0549-4},
       URL = {https://doi.org/10.1007/s00153-017-0549-4},
}

@article{Sh:106,
 author = {Abraham, Uri and Shelah, Saharon},
 title = {{[Sh:106] Martin's axiom does not imply that every two $\aleph_1$-dense sets of reals are isomorphic}},
 journal = {Israel J. Math.},
 fjournal = {Israel Journal of Mathematics},
 volume = {38},
 number = {1-2},
 year = {1981},
 pages = {161--176},
 issn = {0021-2172},
 mrclass = {03E35 (03E50 26A99)},
 note = {\href{https://www.ams.org/mathscinet-getitem?mr=599485}{MR599485} \href{https://doi.org/10.1007/BF02761858}{DOI: 10.1007/BF02761858}},
 doi_for_the_record = {10.1007/BF02761858},
 mrnumber_for_the_record = {599485}
}

@phdthesis{A,
 author = {Uri Abraham},
 title = {Isomorphisms of Aronszajn trees and forcing without the generalized continuum hypothesis {\rm (in Hebrew)}},
 year = {1979},
 school = {The Hebrew University, Jerusalem},
 usedin = {cardarith}
}

@article{Ba,
 author = {James E. Baumgartner},
 title = {Almost disjoint sets, the dense set problem and partition calculus},
 journal = {Annals of Math Logic},
 volume = {9},
 year = {1976},
 pages = {401-439},
 usedin = {cardarith}
}

@article{B,
 author = {James E. Baumgartner},
 title = {Decomposition of embedding of trees},
 journal = {Notices Amer. Math. Soc.},
 volume = {17},
 year = {1970},
 pages = {967}
}

@incollection{Bls10,
 author = {Blass, Andreas},
 title = {Combinatorial Cardinal Characteristics of the Continuum},
 booktitle = {{Handbook of Set Theory}},
 volume = {1},
 pages = {395--490},
 publisher = {Springer},
 editor = {Matthew Foreman and Akihiro Kanamori}
}

@incollection{Ch,
 author = {Chen C. Chang},
 title = {Some remarks on the model theory of infinitary languages},
 booktitle = {{The Synton and Semantics of Infinitary Languages}},
 series = {Lecture Notes in Math.},
 volume = {72},
 year = {1968},
 pages = {36-63},
 publisher = {Springer-Verlag},
 editor = {K. J. Barwise},
 usedin = {cardarith}
}

@book{G,
 author = {Kurt G{\"o}del},
 title = {The consistency of the axiom of choice and the generalized continuum-hypothesis with the axiomes of set theory},
 year = {1940},
 publisher = {Princeton University Press},
 usedin = {cardarith}
}

@article{Ma,
 author = {Edward Marczewski},
 title = {S\'eparabilit\'e et multiplication cart\'esienne des espaces topologiques},
 journal = {Fundamenta Math.},
 volume = {34},
 year = {1947},
 pages = {127-143},
 usedin = {cardarith}
}

@unpublished{M,
 author = {J. Donald Monk},
 title = {Cardinal functions of Boolean algebras},
 note = {circulated notes},
 usedin = {cardarith}
}

@article{P,
 author = {E.S. Pondiczery},
 title = {Power problems in abstract spaces},
 journal = {Duke Math. J},
 volume = {11},
 year = {1944},
 pages = {835-837},
 usedin = {cardarith}
}

@incollection{So,
 author = {Robert M. Solovay},
 title = {$2^{\aleph_0}$ can be anything it ought to be},
 booktitle = {{The Theory of Models}},
 year = {1965},
 pages = {435},
 publisher = {North--Holland Publ. Co},
 editor = {J. W. Addison and  L.A. Henkin and A. Tarski},
 usedin = {cardarith}
}
\end{document}